\theoremstyle{plain}
\newtheorem*{thm*}{Theorem} 
\newtheorem{thm}{Theorem}[section] 
\newtheorem{lem}[thm]{Lemma}
\newtheorem*{lem*}{Lemma}
\newtheorem{prop}[thm]{Proposition}
\newtheorem*{prop*}{Proposition}
\newtheorem{cor}[thm]{Corollary}
\newtheorem*{cor*}{Corollary}
\theoremstyle{definition}
\newtheorem*{defn*}{Definition}
\newtheorem*{exmp*}{Example}
\newtheorem*{remk*}{Remark}
\newtheorem*{prob*}{Problem}
\newtheorem*{ques*}{\frame{Question}}
\def\blfootnote{\xdef\@thefnmark{}\@footnotetext} 
\def\nn{\mathbb{N}}
\def\zz{\mathbb{Z}}
\def\qq{\mathbb{Q}}
\def\cc{\mathbb{C}}
\def\ker{{\rm ker}}
\def\tr{{\rm tr}}
\def\id{{\rm id}}
\def\Aut{{\rm Aut}}
\def\ord{{\rm ord}}
\def\aug{{\rm aug}}
\begin{document}


\title[{MJD in $\zz[Q_8 \times C_p]$}]{The Multiplicative Jordan Decomposition in the Integral Group Ring $\zz[Q_8 \times C_p]$}

\date{\textsuperscript{\textcopyright}  2019. This manuscript version is made available under the CC-BY-NC-ND 4.0 license http://creativecommons.org/licenses/by-nc-nd/4.0}

\author{Wentang Kuo}
\address{Department of Pure Mathematics, Faculty of Mathematics, University of Waterloo, Waterloo, Ontario, N2L 3G1, Canada}
\email{wtkuo@uwaterloo.ca}

\author{Wei-Liang Sun}
\address{Department of Mathematics, National Taiwan Normal University, Taipei 11677, Taiwan, ROC}
\email{wlsun@ntnu.edu.tw}

\maketitle


\begin{abstract}
Let $p$ be a prime such that the multiplicative order $m$ of $2$ modulo $p$ is even. We prove that the integral group ring $\zz[Q_8 \times C_p]$ has the multiplicative Jordan decomposition property when $m$ is congruent to $2$ modulo $4$. There are infinitely many such primes and these primes include the case $p \equiv 3 \pmod{4}$. We also prove that $\zz[Q_8 \times C_5]$ has the multiplicative Jordan decomposition property in a new way.
\end{abstract}

\providecommand{\keywords}[1]{\textit{Keywords:} #1}

\keywords{Integral group ring, Multiplicative Jordan decomposition, $Q_8 \times C_p$, Chebotarev density.}

\section{Introduction}
\label{sec1}

For a finite dimensional algebra $A$ over a perfect field $F$, every element $\alpha$ of $A$ has a unique additive Jordan decomposition $\alpha = \alpha_s + \alpha_n$, where $\alpha_s$ is a semisimple element, $\alpha_n$ is a nilpotent element and $\alpha_s \alpha_n = \alpha_n \alpha_s$. Here, an element is called semisimple if its minimal polynomial over $F$ has no repeated roots in the algebraic closure of $F$. If $\alpha$ is a unit, then $\alpha_s$ is also invertible and $\alpha_u = 1 + \alpha_s^{-1} \alpha_n$ a unipotent unit with $\alpha_s \alpha_u = \alpha_u \alpha_s$. Then $\alpha = \alpha_s \alpha_u$ is the unique multiplicative Jordan decomposition of $\alpha$. Let $F$ be the field $\qq$ of rational numbers and $A = \qq[G]$ be the rational group algebra of a finite group $G$ over $\qq$. Viewing $\zz[G]$ as a subring of $\qq[G]$, each unit $\alpha$ of $\zz[G]$ has the unique multiplicative Jordan decomposition $\alpha = \alpha_s \alpha_u$ in $\qq[G]$. Usually, $\alpha_s$ and $\alpha_u$ do not always lie in $\zz[G]$. Following \cite{AroHalPas} and \cite{HalPasWil}, we say that $\zz[G]$ has the multiplicative Jordan decomposition property (MJD) if for every unit $\alpha$ of $\zz[G]$, those $\alpha_s$ and $\alpha_u$ are contained in $\zz[G]$.

The MJD problem on group rings $R[G]$ of finite groups over integral domains $R$ was first proposed by A.W. Hales and I.B.S. Passi \cite[Concluding remarks]{HalPas} in 1991. It is a particularly interesting problem of classifying those finite groups $G$ for which $\zz[G]$ has MJD. In 2007, Hales, Passi and L.E. Wilson \cite[Theorem~29]{HalPasWil} (together with \cite{HalPasWil2}) shows that if $\zz[G]$ has MJD, then one of the following holds: 
\begin{itemize}
\item[(i)]
$G$ is either abelian or of the form $Q_8 \times E \times A$ where $Q_8$ is the quaternion group of order eight, $E$ is an elementary abelian $2$-group and $A$ is abelian of odd order such that the multiplicative order of $2$ modulo $|A|$ is odd (in this case, $\qq[G]$ contains no nonzero nilpotent elements);
\item[(ii)]
$G$ has order $2^a 3^b$ for some nonnegative integers $a, b$;
\item[(iii)]
$G = Q_8 \times C_p$ for some prime $p \geq 5$ such that the multiplicative order of $2$ modulo $p$ is even;
\item[(iv)]
$G$ is the split extension of $C_p$ ($p \geq 5$) by a cyclic group $\langle g \rangle$ of order $2^k$ or $3^k$ for some $k \geq 1$, and $g^2$ or $g^3$ acts trivially on $C_p$.
\end{itemize}
This theorem is a one-way result: there are some groups $G$ above so that $\zz[G]$ does not have MJD. It should be classified which group rings have MJD for groups in each case. This classification has not been completed and there are some current results for each case. For any group in case (i), $\zz[G]$ has MJD since it contains no nonzero nilpotent elements (see Theorem~\ref{thmnilp}). For case (ii), the classification has been completed by S.R. Arora, Hales, Passi, Wilson \cite{AroHalPas, HalPasWil, HalPasWil2}, M.M. Parmenter \cite{Par}, C.-H. Liu and D.S. Passman \cite{LiuPas, LiuPas4, LiuPas2}. For case (iv), it is still incomplete but many of possibilities are classified by several authors as above in \cite{AroHalPas, LiuPas3, HalPas2}. On the contrary, no progress for case (iii) has been made except $p = 5$ by X.-L. Wang and Q.-X. Zhou \cite{WanZho}. The survey paper \cite{HalPas2} gives the progress on the MJD problem and related topics.

The aim of this paper is to prove that there are infinitely many primes $p$ in the case (iii) such that $\zz[Q_8 \times C_p]$ has MJD. This gives a great advance to case (iii) since 1991. More precisely, we will prove the following result. 
\begin{thm}
\label{thm2}
If the multiplicative order of $2$ modulo $p$ is congruent to $2$ modulo $4$, then $\zz[Q_8 \times C_p]$ has MJD.
\end{thm}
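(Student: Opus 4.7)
The plan is to reduce multiplicative Jordan decomposition for $\zz[Q_8\times C_p]$ to the unique simple component of $\qq[Q_8\times C_p]$ that carries nonzero nilpotent elements, and then to settle the resulting integrality question using the arithmetic hypothesis on $m$.

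First I would Wedderburn-decompose $\qq[Q_8\times C_p]\cong\qq[Q_8]\otimes_\qq\qq[C_p]\cong(\qq^4\oplus\hh(\qq))\otimes(\qq\oplus\qq(\zeta_p))$. All simple factors are commutative or copies of $\hh(\qq)$, except for $\hh(\qq)\otimes\qq(\zeta_p)\cong\hh(\qq(\zeta_p))$. Since $m$ is even, the Hasse invariant of $\hh(\qq)$ at each prime of $\zz[\zeta_p]$ above $2$ (of residue degree $m$) is $m/2\equiv 0\pmod{\zz}$, so this factor splits as $M_2(\qq(\zeta_p))$; by Theorem~\ref{thmnilp}-type considerations, it is the only component carrying nonzero nilpotents. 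Therefore, for any unit $\alpha\in\zz[Q_8\times C_p]$, the multiplicative Jordan decomposition satisfies $\alpha_u=1$ and $\alpha_s=\alpha$ in every other Wedderburn factor, so $\alpha_s,\alpha_u\in\zz[Q_8\times C_p]$ is equivalent to showing that $\alpha_n:=\alpha-\alpha_s\in\qq[Q_8\times C_p]$---which is supported only on the $M_2$-component---actually lies in $\zz[Q_8\times C_p]$.

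Next I would identify the projection $\pi:\zz[Q_8\times C_p]\to M_2(\qq(\zeta_p))$ explicitly. Taking generators $x,y$ of $Q_8$ to $i,j\in\hh(\qq)$ and a generator $c$ of $C_p$ to $\zeta_p$, the image $R:=\pi(\zz[Q_8\times C_p])$ equals the order $\Lambda[\zeta_p]$, where $\Lambda=\zz+\zz i+\zz j+\zz k$ is the Lipschitz order. For a unit $\beta=\pi(\alpha)\in R$ with a repeated eigenvalue $\lambda=\tfrac12\tr\beta$, the reduced trace on $\Lambda$ takes even values, so $\lambda\in\zz[\zeta_p]$ automatically; and the reduced norm $\det\beta=\lambda^2$ is a unit of $\zz[\zeta_p]$, so $\lambda\in\zz[\zeta_p]^\times$. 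Consequently, inside the $M_2$-component, the Jordan parts $\lambda I$ and $I+\lambda^{-1}(\beta-\lambda I)$ lie in $R$ automatically. The real difficulty is that $\alpha_n$ is the element $N\cdot e_{M_2}\in\qq[Q_8\times C_p]$, where the central idempotent $e_{M_2}=(1-z)(p-\widehat{C_p})/(2p)$ carries denominator $2p$; verifying $N\cdot e_{M_2}\in\zz[Q_8\times C_p]$ is a $2p$-local integrality question requiring compatibility congruences between the image of $\alpha_n$ in the $M_2$-component and its (zero) images in the other factors.

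Here the arithmetic hypothesis is decisive. The condition $m\equiv 2\pmod 4$ is equivalent to the image of $2$ in $\Gal(\qq(\zeta_p+\zeta_p^{-1})/\qq)$ having odd order $m/2$; equivalently, every prime of $\zz[\zeta_p]$ above $2$ is inert over $\zz[\zeta_p+\zeta_p^{-1}]$, so $\hh(\qq)\otimes_\qq\qq(\zeta_p+\zeta_p^{-1})$ remains a division algebra and $M_2(\qq(\zeta_p))$ is realized as the quadratic unramified CM extension of that division algebra. This gives tight control over the $2$-adic order structure of $R$ as an overorder of a maximal order in a local matrix algebra, which is precisely what is needed to force the $2p$-integrality of $\alpha_n$ for every unit $\alpha$. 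The main obstacle will be this final $2$-adic calculation: describing $R_{(2)}$ locally at each prime $\mathfrak{P}$ of $\zz[\zeta_p]$ above $2$ inside $M_2(\qq_2(\zeta_p)_\mathfrak{P})$, identifying its maximal overorder, and using inertness over the real subring to verify the denominator cancellation required for $\alpha_n\in\zz[Q_8\times C_p]$. Chebotarev density enters to guarantee the infinitude of primes with $m\equiv 2\pmod 4$, by translating the condition into a union of Frobenius conjugacy classes in a suitable finite cyclotomic Galois group.
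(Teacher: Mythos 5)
Your setup is in the right direction and aligns with the paper's reduction: decompose $\qq[Q_8\times C_p]$ into Wedderburn components, observe that the nilpotent content lives in the single $M_2(\qq(\varepsilon))$ factor, note that the repeated eigenvalue $\lambda=\tfrac12\tr\rho(u)$ is a unit of $\zz[\varepsilon]$, and reduce MJD to an integrality statement about the semisimple (or nilpotent) part. The paper does essentially this in Sections~\ref{sec2}--\ref{sec3}. But the heart of Theorem~\ref{thm2} is \emph{proving} that integrality, and your proposal stops there.

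There are two concrete problems. First, the claimed equivalence is false: you write that $m\equiv 2\pmod 4$ (with $m=\ord_p(2)$) is equivalent to every prime of $\zz[\zeta_p]$ above $2$ being inert over $\zz[\zeta_p+\zeta_p^{-1}]$. In fact that inertness holds whenever $\ord_p(2)$ is \emph{even}: the decomposition group of $2$ in $(\zz/p)^\times$ is cyclic of even order and hence contains $-1=2^{\ord_p(2)/2}$, so the primes above $2$ never split in the degree-two extension $\qq(\zeta_p)/\qq(\zeta_p+\zeta_p^{-1})$. Thus your candidate distinguishing feature of the hypothesis $m\equiv 2\pmod 4$ doesn't actually distinguish it from the full case (iii), and it cannot by itself be "decisive." Second, even granting some correct arithmetic characterization, you explicitly defer "the final $2$-adic calculation" and call it the "main obstacle"; that calculation is precisely the content of the theorem, so the proposal is a framework rather than a proof.

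What the paper does instead, after reaching the same reduction, is elementary and quite different in flavor. Writing $u=\sum f_g(t)g$ and $F_g=f_g(t)+f_{gz}(t)$, one must show $F_a,F_b,F_c\in 2\zz[C_p]$. Projecting $u$ through $\qq[G]\to\qq[G/G']\cong 4\,\qq[C_p]$ gives four units $w_1,w_a,w_b,w_c\in\mathcal{U}_1(\zz[C_p])$; a congruence argument forces them into $\mathcal{U}_2(\zz[C_p])=\mathcal{U}_*(\zz[C_p])$, hence all $F_g$ are $*$-invariant. From $\sum(f_g-f_{gz})^2=0$ one gets $F_a^2+F_b^2+F_c^2\equiv 0\pmod 4$, which yields $F_a+F_b+F_c\equiv 0\pmod 2$ and then $F_a^3\equiv F_b^3\pmod 2$. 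The hypothesis $\ord_p(2)=2m$ with $m$ odd enters via $2^m\equiv-1\pmod p$: the Frobenius-squaring congruence $F_g^{2^k}\equiv\sum_i g_i t^{2^k i}\pmod 2$ gives $F_g^{2^m}\equiv F_g^*\equiv F_g\pmod 2$, and since $m$ odd forces $3\mid 2^m+1$, the cube identity collapses to $F_a^2\equiv F_b^2$, hence $F_a\equiv F_b\pmod 2$, and by symmetry all three are $\equiv 0\pmod 2$. This mod-$2$ manipulation inside $\zz[C_p]$, not local order theory over $\zz_2[\zeta_p]$, is where the hypothesis $\ord_p(2)\equiv 2\pmod 4$ is genuinely used.
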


It is interesting that for groups in cases (ii) and (iv), their integral group rings usually do not have MJD when group orders are large enough (except for orders $2p$ and $4p$ in case (iv)).

Note that if the multiplicative order of $2$ modulo $p$ is even, says $2 k$, then $k$ divides $(p-1)/2$ since $2 k$ divides $p-1$. Thus, $k$ is odd when $p \equiv 3 \pmod{4}$. We have the following immediate consequence. 

\begin{cor}
If the multiplicative order of $2$ modulo $p$ is even and $p \equiv 3 \pmod{4}$, then $\zz[Q_8 \times C_p]$ has MJD.
\end{cor}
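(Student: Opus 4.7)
The plan is to derive the corollary as an immediate consequence of Theorem~\ref{thm2}, by showing that the two hypotheses (namely that the multiplicative order $m$ of $2$ modulo $p$ is even \emph{and} $p \equiv 3 \pmod 4$) force $m \equiv 2 \pmod 4$. The paragraph preceding the corollary essentially already performs this reduction, so I would formalise it as the proof.

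First, I would set $m = \ord_p(2)$ and write $m = 2k$ using the hypothesis that $m$ is even; the goal becomes showing that $k$ is odd. By Fermat's little theorem, $m$ divides $p-1$, so $2k \mid p-1$, which means $k$ divides $(p-1)/2$. Next, I would use the congruence $p \equiv 3 \pmod 4$ to observe that $(p-1)/2$ is odd. Any divisor of an odd number is odd, so $k$ is odd. Consequently $m = 2k$ is congruent to $2$ modulo $4$, and Theorem~\ref{thm2} applies to give that $\zz[Q_8 \times C_p]$ has MJD.

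There is really no substantive obstacle here: the argument is a one-line divisibility observation combined with invocation of the main theorem. The content of the corollary is purely that the family of primes to which Theorem~\ref{thm2} applies includes all primes $p \equiv 3 \pmod 4$ for which $2$ has even order modulo $p$; the only thing to verify is that the stronger congruence condition on $\ord_p(2)$ in Theorem~\ref{thm2} is automatic under the weaker hypotheses of the corollary. The only place one might slip is in remembering to use $2k \mid p-1$ rather than merely $k \mid p-1$, since it is the factor $(p-1)/2$ (rather than $p-1$ itself) whose parity is controlled by $p \bmod 4$.
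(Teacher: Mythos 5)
Your argument is correct and is essentially the same as the paper's: the paragraph preceding the corollary makes precisely this observation, that $2k \mid p-1$ forces $k \mid (p-1)/2$, which is odd when $p \equiv 3 \pmod 4$, so $m = 2k \equiv 2 \pmod 4$ and Theorem~\ref{thm2} applies. Nothing to add.
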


Assume that {\it the multiplicative order of $2$ modulo an odd prime $p$ is even} so that the group ring $\qq[Q_8 \times C_p]$ has nonzero nilpotent elements. In Section~\ref{sec2}, we first study nonzero nilpotent elements of $\qq[G]$. Each nonzero nilpotent element has a clear form. Then we find the Jordan decomposition for each non-semisimple unit of $\zz[Q_8 \times C_p]$ in Section~\ref{sec3}. Studying semisimple parts of non-semisimple units will lead us to prove Theorem~\ref{thm2} in Section~\ref{sec4}. Back to the prime $p$. When $p \equiv 3 \pmod{4}$, we already have the positive result by the previous corollary. For $p \equiv 1 \pmod{4}$, we remark that $281$ is the smallest prime satisfying the condition of Theorem~\ref{thm2}. Note that it was proved that $\zz[Q_8 \times C_5]$ has MJD in \cite{WanZho}. They proved this by computing a certain subgroup of $\mathcal{U}(\zz[\varepsilon_5])$ for some primitive $5$-th root $\varepsilon_5$ of $1$ in $\cc$. However, it is not easy to compute subgroups of $\mathcal{U}(\zz[\varepsilon_p])$ for large $p$. In Section~\ref{sec5}, we will provide a new proof for $p = 5$ without computing subgroups of $\mathcal{U}(\zz[\varepsilon_5])$. Finally, we will compute the Chebotarev density for primes satisfying the condition of Theorem~\ref{thm2} in Section~\ref{sec6}. Thus we show that there are infinitely many such primes $p$ so that $\zz[Q_8 \times C_p]$ has MJD.

\section{Nilpotent Elements in $\qq[Q_8 \times C_p]$}
\label{sec2}

For a group $G$ and $\alpha = \sum_{g \in G} \alpha_g g \in \qq[G]$ with $\alpha_g \in \qq$, we denote by $\aug(\alpha) = \sum_{g \in G} \alpha_g$ the augmentation of $\alpha$. If $\aug(\alpha) = 1$, $\alpha$ is said to have augmentation $1$. View $\zz[G]$ as a subring of $\qq[G]$. Denote $\mathcal{U}(\zz[G])$ the unit group of $\zz[G]$ and $\mathcal{U}_1(\zz[G])$ the group of units in $\mathcal{U}(\zz[G])$ with augmentation $1$. 

Let $p$ be an odd prime such that the multiplicative order of $2$ modulo $p$ is even. Let $G = Q_8 \times C_p$ where $$Q_8 = \langle a, b \mid a^4 = 1, \; a^2 = b^2, \; b a b^{-1} = a^{-1} \rangle$$ and $$C_p = \langle t \mid t^p = 1 \rangle.$$ Denote $c = a b$ and $z = a^2 = b^2 = c^2$. In this section, we will study nilpotent elements in $\qq[G]$. It will be used when we try to find the Jordan decomposition of a non-semisimple unit in the next section.

Since the multiplicative order of $2$ modulo $p$ is even, there exist $r, s \in \zz[\varepsilon]$ such that $$r^2 + s^2 = -1$$ where $\varepsilon$ is a primitive $p$-th root of $1$ in $\cc$ (\cite[p. 153]{GiaSeh}). 
Then we have a ring homomorphism $$\rho: \qq[G] \to M_2(\qq(\varepsilon))$$ defined by $$a \mapsto \left( \begin{array}{cc} 0 & 1 \\ -1 & 0 \end{array} \right), \quad b \mapsto \left( \begin{array}{cc} r & s \\ s & -r \end{array} \right), \quad t \mapsto \left( \begin{array}{cc} \varepsilon & 0 \\ 0 & \varepsilon \end{array} \right).$$ Clearly, $\rho(\zz[G]) \subseteq M_2(\zz[\varepsilon])$. We also consider the following two ring homomorphisms $$\phi: \qq[G] \to \qq[G/G'] = \qq[G/ \langle z \rangle]$$ defined by $a \mapsto \overline{a}$, $b \mapsto \overline{b}$ and $t \mapsto t$ with $\ker(\phi) = (1 - z) \qq[G]$ and $$\theta: \qq[G] \to \qq[G/C_p] = \qq[Q_8]$$ defined by $a \mapsto a$, $b \mapsto b$ and $t \mapsto 1$ with $\ker(\theta) = (1 - t) \qq[G]$.

Note that the center of $G$ is $\langle z \rangle \times \langle t \rangle$ and we have the following immediate consequence which will be used in Section \ref{sec3}.

\begin{lem}
\label{lemJan1}
Let $g \in G$.
\begin{enumerate}
\item[(i)]
If $g$ is not central, then $\tr(\rho(g)) = 0$.
\item[(ii)]
If $g = z^i t^k$, 
then $\tr(\rho(g)) = (-1)^i 2 \varepsilon^k$.
\item[(iii)]
$\tr(\rho(\zz[G])) \subseteq 2 \zz[\varepsilon]$.
\end{enumerate}
\end{lem}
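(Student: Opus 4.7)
The plan is to reduce everything to direct matrix computation on $Q_8$, exploiting that $\rho(t) = \varepsilon I$ is a scalar matrix and therefore commutes with $\rho(Q_8)$, and that $\rho(z) = \rho(a)^2 = -I$.

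First I would dispatch (ii). Combining $\rho(z) = -I$ with $\rho(t^k) = \varepsilon^k I$ yields $\rho(z^i t^k) = (-1)^i \varepsilon^k I$, whose trace is $(-1)^i 2 \varepsilon^k$.

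For (i), the paragraph just before the lemma identifies the center of $G$ as $\langle z \rangle \times \langle t \rangle$, so every non-central $g$ has the form $g = q t^k$ with $q \in Q_8 \setminus \{1, z\} = \{a, a^3, b, a^2 b, ab, a^3 b\}$. Since $\rho(t^k) = \varepsilon^k I$ is central in $M_2(\qq(\varepsilon))$, we have $\tr(\rho(q t^k)) = \varepsilon^k \tr(\rho(q))$, and it suffices to verify $\tr(\rho(q)) = 0$ for these six elements. Direct inspection of the defining matrices gives $\tr(\rho(a)) = 0$ and $\tr(\rho(b)) = r + (-r) = 0$; the remaining four images can be written as $\rho(a^3) = \rho(a)^3$, $\rho(a^2 b) = -\rho(b)$, $\rho(ab) = \rho(a) \rho(b)$, and $\rho(a^3 b) = -\rho(ab)$, each of which inherits zero trace after at most a one-line multiplication.

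Finally, (iii) is immediate from (i), (ii), and $\qq$-linearity of $\tr \circ \rho$: every $g \in G$ contributes a trace in $2\zz[\varepsilon]$ (either $0$ or $\pm 2\varepsilon^k$), and since $\zz[G]$ is $\zz$-spanned by $G$, its image under $\tr \circ \rho$ lies in $2\zz[\varepsilon]$.

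There is no real obstacle here; this is a bookkeeping lemma once one notices that $\rho$ sends the center of $G$ to scalar matrices. The only point worth recording is that $r$ and $s$ genuinely lie in $\zz[\varepsilon]$ (not merely in $\qq(\varepsilon)$), which is built into the construction of $\rho$ and is what promotes (iii) from a statement in $2\qq(\varepsilon)$ to an integral one in $2\zz[\varepsilon]$.
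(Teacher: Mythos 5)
Your proof is correct, and the paper itself offers none: the lemma is stated as an ``immediate consequence'' of the observation that the center of $G$ is $\langle z \rangle \times \langle t \rangle$, so there is no proof to compare against. The route you take---$\rho(t)$ scalar, $\rho(z) = \rho(a)^2 = -I$ giving (ii), the six non-central elements of $Q_8$ checked to have traceless images giving (i), and $\zz$-linearity of $\tr \circ \rho$ giving (iii)---is exactly the standard bookkeeping the authors are gesturing at.

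One small inaccuracy worth fixing in your closing remark: the integrality $r, s \in \zz[\varepsilon]$ is \emph{not} what makes (iii) an integral rather than a rational statement. Your own computation shows that $\tr(\rho(g))$ equals either $0$ or $\pm 2\varepsilon^k$ for every $g \in G$, and these values never involve $r$ or $s$; the entries containing $r$ and $s$ cancel on the diagonal. So (iii) follows from (i), (ii), and $\zz$-linearity regardless of where $r$ and $s$ live. The condition $r, s \in \zz[\varepsilon]$ is what guarantees the stronger containment $\rho(\zz[G]) \subseteq M_2(\zz[\varepsilon])$ stated just before the lemma, but the trace assertion (iii) is independent of it.
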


Now, consider the following ring isomorphism $$\Delta: \qq[C_p] \overset{\simeq}{\to} \qq \oplus \qq(\varepsilon)$$ by sending $t$ to $(1, \varepsilon)$. For an element $\alpha(t) \in \qq[C_p]$, we denote $$\Delta(\alpha(t)) = (\alpha(1), \alpha(\varepsilon))$$ for convenience. Note that $\alpha(1) = \aug(\alpha)$. Under the isomorphism, if $\alpha(1) = 0$ and $\alpha(\varepsilon) = 0$, we can conclude that $\alpha(t) = 0$. Next, we consider a certain situation under $\rho$.

\begin{lem}
\label{lemJan3}
Let $\alpha = \sum_{g \in Q_8} \alpha_g(t) g \in \qq[G]$ for some $\alpha_g(t) \in \qq[C_p]$. Let $\xi_g = \alpha_g(\varepsilon) - \alpha_{g z}(\varepsilon)$ for $g \in \{1, a, b, c\}$. Then $\rho(\alpha)$ is nilpotent if and only if $\xi_1 = 0$ and $\xi_a^2 + \xi_b^2 + \xi_c^2 = 0$.
\end{lem}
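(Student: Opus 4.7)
The plan is to reduce $\rho(\alpha)$ to an explicit $2\times 2$ matrix built from four ``quaternion-like'' generators, and then use the fact that nilpotence of a $2\times 2$ matrix is equivalent to $M^2=0$.

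First I would exploit the fact that $\rho(t)=\varepsilon I_2$ acts as a scalar, so that for any $\alpha_g(t)\in\qq[C_p]$ one has $\rho(\alpha_g(t))=\alpha_g(\varepsilon)\,I_2$. Consequently
\[
\rho(\alpha)\;=\;\sum_{g\in Q_8}\alpha_g(\varepsilon)\,\rho(g).
\]
Writing $Q_8=\{1,a,b,c\}\cup\{z,az,bz,cz\}$ and using $\rho(z)=-I_2$, the eight summands pair up and collapse to
\[
\rho(\alpha)\;=\;\xi_1 I_2+\xi_a A+\xi_b B+\xi_c C,
\]
where I set $A=\rho(a)$, $B=\rho(b)$, $C=\rho(c)$. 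This matches the definition of the $\xi_g$ in the statement.

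Next I would verify the quaternion relations for $A,B,C$. Direct computation (using only $r^2+s^2=-1$) gives $A^2=B^2=C^2=-I_2$; the pairwise anticommutation $AB+BA=BC+CB=CA+AC=0$ can be checked either by multiplying the explicit matrices or, more conceptually, by lifting to $Q_8$: since $ba=a^{-1}b$ and $\rho(a^{-1})=-A$, one gets $BA=-AB$, and similarly for the other two pairs. The matrices $A,B,C$ also have trace $0$. This is the only slightly computational step, and is the ``main obstacle'' only in the mild sense of bookkeeping; there is nothing deep here.

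With these relations in hand, squaring yields
\[
\rho(\alpha)^2\;=\;\bigl(\xi_1^2-\xi_a^2-\xi_b^2-\xi_c^2\bigr)\,I_2\;+\;2\xi_1(\xi_a A+\xi_b B+\xi_c C),
\]
because all cross terms among $A,B,C$ cancel by anticommutation. Since $\rho(\alpha)$ is a $2\times 2$ matrix, it is nilpotent iff $\rho(\alpha)^2=0$. I would split this into two conditions: the traceless part forces $\xi_1(\xi_a A+\xi_b B+\xi_c C)=0$, and the scalar part forces $\xi_1^2=\xi_a^2+\xi_b^2+\xi_c^2$. If $A,B,C$ are linearly independent over $\qq(\varepsilon)$ together with $I_2$ being independent from the traceless part, then $\xi_1(\xi_a A+\xi_b B+\xi_c C)=0$ and nonvanishing of some $\xi_g$ gives $\xi_1=0$; if all of $\xi_a,\xi_b,\xi_c$ vanish, then $\xi_1^2=0$ anyway forces $\xi_1=0$. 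Either way $\xi_1=0$, and then the scalar part reduces to $\xi_a^2+\xi_b^2+\xi_c^2=0$. Conversely, if $\xi_1=0$ and $\xi_a^2+\xi_b^2+\xi_c^2=0$, the displayed formula for $\rho(\alpha)^2$ shows $\rho(\alpha)^2=0$, so $\rho(\alpha)$ is nilpotent. (One can also argue the forward direction more cleanly by taking the trace of $\rho(\alpha)$, which equals $2\xi_1$ by Lemma~\ref{lemJan1}(i)–(ii), so nilpotence immediately gives $\xi_1=0$; then the square formula yields the second condition.) This completes the proof.
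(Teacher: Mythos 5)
Your proof is correct, and while it lands on the same underlying algebra, it is organized differently from the paper's. The paper writes out $\rho(\alpha)$ as an explicit $2\times 2$ matrix with entries in $r,s,\xi_g$, reads off $\tr(\rho(\alpha))=2\xi_1$ and $\det(\rho(\alpha))=\xi_1^2+\xi_a^2+\xi_b^2+\xi_c^2$ (using $r^2+s^2=-1$), and then invokes the characteristic polynomial $X^2-\tr X+\det$ to conclude. You instead package $\rho(\alpha)$ as a quaternion-type element $\xi_1 I+\xi_a A+\xi_b B+\xi_c C$, verify $A^2=B^2=C^2=-I$ and pairwise anticommutation, and square to get $\rho(\alpha)^2=(\xi_1^2-\xi_a^2-\xi_b^2-\xi_c^2)I+2\xi_1(\xi_a A+\xi_b B+\xi_c C)$, finishing with the criterion that a $2\times 2$ matrix is nilpotent iff its square vanishes. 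Your main route requires knowing that $\{I,A,B,C\}$ is a $\qq(\varepsilon)$-basis of $M_2(\qq(\varepsilon))$, which you gesture at rather than prove; this is true (it follows from $A,B,C$ being traceless and pairwise independent, or from $\rho$ mapping $\qq(\varepsilon)[Q_8]$ onto the full $4$-dimensional matrix algebra), but it is an extra ingredient the paper's determinant computation avoids. Your parenthetical variant---take the trace first to get $\xi_1=0$, then read $\xi_a^2+\xi_b^2+\xi_c^2=0$ off the scalar part of the square---sidesteps the basis issue entirely and is essentially the paper's argument with the determinant replaced by one explicit squaring; if you were to write this up I would recommend leading with that version. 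Conceptually, your framing makes the quaternion algebra structure of $\rho(\qq(\varepsilon)[Q_8])$ visible, which is a nice payoff, at the cost of a bit more scaffolding than the paper's direct trace/determinant computation.
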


\begin{proof}
Observe that $$\rho(\alpha) = \sum_{g \in Q_8} \alpha_g(\varepsilon) \rho(g) = \left( \begin{array}{rr} \xi_1 + r \xi_b + s \xi_c & \xi_a + s \xi_b - r \xi_c \\ - \xi_a + s \xi_b - r \xi_c & \xi_1 - r \xi_b - s \xi_c \end{array}\right).$$ Then $\tr(\rho(\alpha)) = 2 \xi_1$ and $$\det(\rho(\alpha)) = \xi_1^2 + \xi_a^2 + \xi_b^2 + \xi_c^2$$ since $r^2 + s^2 = -1$. The result holds since $X^2 - \tr(\rho(\alpha)) X + \det(\rho(\alpha))$ is the characteristic polynomial of $\rho(\alpha)$. 
\end{proof}

We need the following well-known result.

\begin{thm}[{\cite[p. 172]{Seh} or \cite[Theorem 7.4.11]{PMSeh}}]
\label{thmnilp}
Let $H$ be a finite group of order $2^k m$ with odd $m$. Then $\qq[H]$ has no nonzero nilpotent elements if and only if $H$ is either abelian or $H = Q_8 \times E \times A$ where $E$ is an elementary abelian $2$-group and $A$ is an abelian group of order $m$ such that the multiplicative order of $2$ modulo $m$ is odd.
\end{thm}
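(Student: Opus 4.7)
The plan is to reformulate the nilpotent-free condition via Wedderburn theory and then attack each direction using character theory and Brauer theory. The starting observation is that a finite-dimensional semisimple $\qq$-algebra has no nonzero nilpotent elements if and only if every Wedderburn component is a division algebra (since $M_n(D)$ for $n \geq 2$ contains the nilpotent $e_{12}$). Equivalently, in the character-theoretic description of $\qq[H]$, every $\chi \in \Irr(H)$ must satisfy $\chi(1) = m_\qq(\chi)$, the Schur index over $\qq$.

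For sufficiency, the abelian case is immediate since every $\chi(1) = 1$. If $H = Q_8 \times E \times A$, I would write $\qq[H] \cong \qq[Q_8] \otimes_\qq \qq[E] \otimes_\qq \qq[A]$, noting $\qq[Q_8] \cong \qq^4 \oplus \hh_\qq$ where $\hh_\qq$ denotes the rational quaternion algebra, $\qq[E] \cong \qq^{|E|}$, and $\qq[A] \cong \bigoplus_{d \mid |A|} \qq(\zeta_d)^{a_d}$ for suitable multiplicities $a_d$. The only potentially non-field summands are of the form $\hh_\qq \otimes_\qq \qq(\zeta_d)$. By the Hasse--Brauer--Noether theorem, $\hh_\qq$ has local invariants $\tfrac{1}{2}$ at $\infty$ and at $2$ and $0$ elsewhere; for $d > 1$ odd, $\qq(\zeta_d)$ is totally imaginary and unramified at $2$ with residue degree $f = \ord_d(2)$, so the only obstruction to splitting is at the primes above $2$, where the invariant of the base change becomes $f/2 \pmod{\zz}$. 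Hence $\hh_\qq \otimes_\qq \qq(\zeta_d)$ stays a division algebra iff $f$ is odd, and this holds for every $d \mid |A|$ iff $\ord_{|A|}(2)$ is odd.

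For necessity, assume $\qq[H]$ is reduced and $H$ is non-abelian. The crucial step is to show that $H$ is Hamiltonian, i.e.\ every subgroup of $H$ is normal: if some subgroup $K \leq H$ satisfies $g K g^{-1} \neq K$, then by exploiting the noncommutativity of the class sum $\widehat{K} = \sum_{k \in K} k$ with $g$ one constructs a nonzero nilpotent in $\qq[H]$ (for example, an element built from $\widehat{K}$ and $g \widehat{K} g^{-1}$ whose square vanishes because the two idempotents-up-to-scalar fail to commute). Once $H$ is known to be Hamiltonian, Dedekind's classical theorem gives $H = Q_8 \times E \times A$ with $E$ elementary abelian $2$-group and $A$ abelian of odd order, and the computation from the sufficiency direction then forces $\ord_{|A|}(2)$ to be odd.

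The main obstacle is the Hamiltonian reduction: producing an explicit nonzero nilpotent from an arbitrary non-normal subgroup requires a careful argument, typically either through a case analysis of minimal non-Hamiltonian groups (e.g.\ dihedral or symmetric quotients) and the exhibition of nilpotents in the corresponding rational group algebras, or through a Frobenius-reciprocity argument showing that a non-normal subgroup yields an irreducible rational representation whose simple component is a matrix algebra of size at least two over its division ring. Either route is standard in character-theoretic treatments of Sehgal's theorem, but requires real work; once it is in hand, the remainder of the proof reduces to the cyclotomic/quaternion splitting calculation above.
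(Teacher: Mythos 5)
The paper does not prove this theorem; it is quoted as a known result with citations to Sehgal and Polcino Milies--Sehgal, so there is no internal proof to compare against. Your proposal is, in substance, the standard argument found in those references, and it is correct: reduce to "every Wedderburn component is a division ring"; for sufficiency, factor $\qq[Q_8 \times E \times A]$ and note that the only potentially non-commutative simple components are $\hh_\qq \otimes_\qq \qq(\zeta_d)$, whose local invariant at $2$ after base change is $f/2 \pmod{\zz}$ with $f = \ord_d(2)$ and whose invariants at infinity die because $\qq(\zeta_d)$ is totally imaginary for $d>1$; division iff $f$ is odd. One small precision: the fields $\qq(\zeta_d)$ occurring in $\qq[A]$ are indexed by the orders $d$ of elements of $A$ (divisors of $\exp(A)$), not all divisors of $|A|$; but since for odd $p$ the order $\ord_{p^k}(2)$ is odd iff $\ord_p(2)$ is, the conditions "$\ord_d(2)$ odd for all such $d$" and "$\ord_{|A|}(2)$ odd" are equivalent, so the statement is unaffected.

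The step you flagged as "requiring real work," the Hamiltonian reduction, actually has a short self-contained argument that you should spell out rather than leave as a sketch: in any semisimple $\qq$-algebra with no nonzero nilpotent elements, every idempotent $e$ is central, since $e\alpha(1-e)$ squares to zero and hence must vanish for all $\alpha$, giving $e\alpha = e\alpha e = \alpha e$. Applying this to $e_K = |K|^{-1}\widehat{K}$ for an arbitrary subgroup $K \le H$ yields $g\widehat{K}g^{-1} = \widehat{K}$ for all $g$, i.e., $K \trianglelefteq H$, so $H$ is Hamiltonian. Dedekind's classification then gives $H = Q_8 \times E \times A$, and re-running your invariant computation forces $\ord_{|A|}(2)$ to be odd. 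With that paragraph inserted your proof is complete.
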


Now, each nilpotent element in $\qq[G]$ has a nice form.

\begin{prop}
\label{prop2}
Let $\alpha = \sum_{g \in Q_8} \alpha_g(t) g \in \qq[G]$ with $\alpha_g(t) \in \qq[C_p]$. Then $\alpha$ is nilpotent if and only if $$\alpha = \sum_{g \in \{a, b, c\}} \alpha_g(t) g (1 - z)$$ with $$\alpha_a(t)^2 + \alpha_b(t)^2 + \alpha_c(t)^2 = 0.$$ In this case, $\alpha^2 = 0$ and $\alpha_g(1) = 0$ for $g \in \{a, b, c\}$.
\end{prop}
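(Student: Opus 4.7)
The plan is to extract three independent pieces of information from the nilpotency of $\alpha$ by applying the three ring homomorphisms $\phi$, $\theta$, $\rho$ introduced in this section, and then recombine them via the isomorphism $\Delta$. The converse direction will reduce to a direct multiplication using the quaternion relations together with the two algebraic identities $(1-z)^2 = 2(1-z)$ and $(1-z)(1+z) = 0$.

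For the forward implication, suppose $\alpha$ is nilpotent. First I would apply $\phi$: since $G/\langle z\rangle \cong (Q_8/\langle z\rangle)\times C_p$ is abelian, $\qq[G/\langle z\rangle]$ is commutative and semisimple, hence has no nonzero nilpotent elements, so $\phi(\alpha)=0$. Reading this off in the basis of $\qq[\overline{Q}_8]\otimes \qq[C_p]$ yields $\alpha_{gz}(t)=-\alpha_g(t)$ for each $g\in\{1,a,b,c\}$, whence $\alpha=\sum_{g\in\{1,a,b,c\}}\alpha_g(t)\,g(1-z)$. Next apply $\theta$: by Theorem~\ref{thmnilp}, $\qq[Q_8]$ has no nonzero nilpotents, so $\theta(\alpha)=\sum_{g\in\{1,a,b,c\}}\alpha_g(1)\,g(1-z)=0$ in $\qq[Q_8]$; the four elements $\{g(1-z):g\in\{1,a,b,c\}\}$ are $\qq$-linearly independent, so $\alpha_g(1)=0$ for those $g$. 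Finally apply $\rho$: Lemma~\ref{lemJan3} gives $\xi_1=0$ and $\xi_a^2+\xi_b^2+\xi_c^2=0$, and using $\xi_g=\alpha_g(\varepsilon)-\alpha_{gz}(\varepsilon)=2\alpha_g(\varepsilon)$ we obtain $\alpha_1(\varepsilon)=0$ and $\alpha_a(\varepsilon)^2+\alpha_b(\varepsilon)^2+\alpha_c(\varepsilon)^2=0$. Combining via $\Delta$: since $\alpha_1(1)=0=\alpha_1(\varepsilon)$, injectivity of $\Delta$ forces $\alpha_1(t)=0$; and since $\Delta$ is a ring homomorphism, $\alpha_a(t)^2+\alpha_b(t)^2+\alpha_c(t)^2$ maps to $(0,0)$ and therefore equals zero, giving the claimed form.

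For the converse, given $\alpha$ of the stated form with $\alpha_a(t)^2+\alpha_b(t)^2+\alpha_c(t)^2=0$, centrality of $z$ and $(1-z)^2=2(1-z)$ reduce the computation to $\alpha^2 = 2(1-z)\bigl(\sum_{g\in\{a,b,c\}}\alpha_g(t)g\bigr)^2$. Expanding using $a^2=b^2=c^2=z$ together with $ab=c$, $ba=zc$, $bc=a$, $cb=az$, $ca=b$, $ac=zb$, the coefficients of $z$ assemble into $(\alpha_a^2+\alpha_b^2+\alpha_c^2)z$, while the coefficients of $a$, $b$, $c$ each pair with their $z$-shifted partners to produce a factor $(1+z)$. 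Since $(1-z)(1+z)=0$, the off-diagonal part vanishes, leaving $\alpha^2=-2(1-z)(\alpha_a^2+\alpha_b^2+\alpha_c^2)=0$. The supplementary claim $\alpha_g(1)=0$ for $g\in\{a,b,c\}$ is immediate: evaluating the sum-of-squares relation at $t=1$ gives $\alpha_a(1)^2+\alpha_b(1)^2+\alpha_c(1)^2=0$ in $\qq$, which forces each $\alpha_g(1)=0$.

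The main obstacle is conceptual rather than computational: recognizing that no one of $\phi$, $\theta$, $\rho$ suffices on its own, but that together with the direct-sum decomposition encoded by $\Delta$ they determine $\alpha$. The converse is then only a bookkeeping exercise with the $Q_8$ multiplication table, the one pitfall being correctly tracking which products land in the $z$-shifted basis.
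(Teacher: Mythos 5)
Your proof is correct and takes essentially the same route as the paper's: apply $\phi$, $\theta$, and $\rho$ to extract the constraints $\alpha_{gz}=-\alpha_g$, $\alpha_g(1)=0$, and the vanishing of $\xi_1$ and $\xi_a^2+\xi_b^2+\xi_c^2$, then recombine via $\Delta$ to conclude $\alpha_1(t)=0$ and $\alpha_a^2+\alpha_b^2+\alpha_c^2=0$. The small cosmetic differences (reading $\alpha_g(1)=0$ off the sum-of-squares identity at $t=1$ rather than carrying it from $\theta(\alpha)=0$, and spelling out the $Q_8$ multiplication table in the converse) do not change the argument.
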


\begin{proof}
Assume that $\alpha$ is nilpotent and so is $\phi(\alpha)$. Since $G / \langle z \rangle = (Q_8 / \langle z \rangle) \times C_p \simeq C_2 \times C_2 \times C_p$ is abelian, its rational group ring has no nonzero nilpotent elements by Theorem~\ref{thmnilp}. We see that $\phi(\alpha) = 0$ and we get $\sum_{g \in \{1, a, b, c\}} (\alpha_g(t) + \alpha_{g z}(t)) \overline{g} = 0$ in $\qq[G / \langle z \rangle]$. Hence, $\alpha_{g z}(t) = - \alpha_g(t)$ for $g \in \{1, a, b, c\}$. 

Note that $\theta(\alpha)$ is also nilpotent in $\qq[Q_8]$. By Theorem~\ref{thmnilp}, $\theta(\alpha) = 0$ and it implies that $\sum_{g \in Q_8} \alpha_g(1) g = 0$. Hence, $\alpha_g(1) = 0$ for all $g \in Q_8$. Since $\rho(\alpha)$ is nilpotent, we get $\alpha_1(\varepsilon) - \alpha_{z}(\varepsilon) = 0$ by Lemma~\ref{lemJan3}. This implies that $\alpha_1(\varepsilon) = \alpha_z(\varepsilon) = 0$ since $\alpha_z(t) = - \alpha_1(t)$. Now we have $\alpha_1(1) = 0$, $\alpha_1(\varepsilon) = 0$ and hence $\alpha_1(t) = 0$. Moreover, $\alpha_z(t) = 0$. We obtain $\alpha = \sum_{g \in \{a, b, c\}} \alpha_g(t) g (1 - z)$ and $\aug(\alpha_g(t)) = \alpha_g(1) = 0$ for $g \in \{a, b, c\}$.

By Lemma~\ref{lemJan3} again, we have $\sum_{g \in \{a, b, c\}} (\alpha_g(\varepsilon) - \alpha_{g z}(\varepsilon))^2 = 0$. Note that $\alpha_{g z}(t) = - \alpha_g(t)$ for $g \in \{a, b, c\}$ and we obtain $\sum_{g \in \{a, b, c\}} \alpha_g(\varepsilon)^2 = 0$. Since $\sum_{g \in \{a, b, c\}} \alpha_g(1)^2 = 0$, it follows that $\sum_{g \in \{a, b, c\}} \alpha_g(t)^2 = 0$. 

Conversely, if $\alpha = \sum_{g \in \{a, b, c\}} \alpha_g(t) g (1 - z)$, then $\alpha^2 = - 2 (\alpha_a(t)^2 + \alpha_b(t)^2 + \alpha_c(t)^2) (1 - z)$. The result follows when $\sum_{g \in \{a, b, c\}} \alpha_g(t)^2 = 0$.
\end{proof}

\section{Semisimple and Nilpotent Parts}
\label{sec3}

In this section, we are going to give explicit forms of semisimple and nilpotent parts of a non-semisimple unit in $\zz[G]$ where $G = Q_8 \times C_p$. Recall that $C_p = \langle t \mid t^p = 1 \rangle$, $$Q_8 = \langle a, b \mid a^4 = 1, \; a^2 = b^2, \; b a b^{-1} = a^{-1} \rangle$$ and we denote $c = a b$ and $z = a^2 = b^2 = c^2$. First of all, we have the following observation.

\begin{lem}
\label{lem8}
If $u \in \mathcal{U}_1(\zz[G])$ is non-semisimple, then $u_s$ is central.
\end{lem}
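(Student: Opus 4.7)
The plan is to analyze $u_s$ componentwise through the Wedderburn decomposition of $\qq[G]$ and to reduce the claim to showing $\theta(u) \in \{1, z\}$ via a mod-$p$ integrality argument. Because the order of $2$ modulo $p$ is even, the rational quaternion algebra splits over $\qq(\varepsilon)$, giving
\[
\qq[G] \;\cong\; \qq^4 \oplus \hh_\qq \oplus \qq(\varepsilon)^4 \oplus M_2(\qq(\varepsilon)),
\]
whose center is $\qq^4 \oplus \qq \oplus \qq(\varepsilon)^4 \oplus \qq(\varepsilon) I_2$. The projections of $u_s$ into the two commutative factors are automatically central, so it suffices to control the $\hh_\qq$ and $M_2(\qq(\varepsilon))$ components.

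For $M_2(\qq(\varepsilon))$: since $u$ is non-semisimple and every Wedderburn factor other than $M_2(\qq(\varepsilon))$ is either a field or the division algebra $\hh_\qq$ (each containing only semisimple elements), the matrix $\rho(u)$ must itself be non-semisimple. Using the explicit form of $\rho(u)$ from the proof of Lemma~\ref{lemJan3}, its characteristic polynomial is
\[
X^2 - 2\xi_1 X + \xi_1^2 + (\xi_a^2 + \xi_b^2 + \xi_c^2),
\]
with discriminant $-4(\xi_a^2 + \xi_b^2 + \xi_c^2)$. Non-semisimplicity forces the identity $\xi_a^2 + \xi_b^2 + \xi_c^2 = 0$ in $\zz[\varepsilon]$ and yields the double eigenvalue $\xi_1 = u_1(\varepsilon) - u_z(\varepsilon)$. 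Hence $\rho(u_s) = \xi_1 I_2$, a scalar matrix, which is central in $M_2(\qq(\varepsilon))$.

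For $\hh_\qq$: since $\hh_\qq$ has no nonzero nilpotent elements, $u_u$ maps to $1$ there, so $u_s$ projects to the same element as $u$, which equals the image of $\theta(u)$ under the Wedderburn projection $\qq[Q_8] \to \hh_\qq$. Since $\mathcal{U}_1(\zz[Q_8]) = Q_8$ by Higman's theorem, we have $\theta(u) = g$ for some $g \in Q_8$, and the image of $g$ in $\hh_\qq$ lies in $\qq$ exactly when $g \in \cen(Q_8) = \{1, z\}$. To force this, I reduce the identity $\xi_a^2 + \xi_b^2 + \xi_c^2 = 0$ modulo the totally ramified prime $(1 - \varepsilon)$ of $\zz[\varepsilon]$, whose residue field is $\ff_p$; the substitution $\varepsilon \mapsto 1$ converts it into
\[
\sum_{h \in \{a, b, c\}} \bigl(u_h(1) - u_{hz}(1)\bigr)^2 \;\equiv\; 0 \pmod p.
\]
Because $\theta(u) = g$ forces $u_h(1) = \delta_{h, g}$ for each $h \in Q_8$, each summand is $0$ or $1$, and a direct count shows the total equals $0$ if $g \in \{1, z\}$ and $1$ otherwise; the congruence together with $p \geq 3$ forces the sum to vanish, giving $g \in \{1, z\}$ and hence the centrality of $u_s$.

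The main obstacle I expect is recognizing the totally ramified prime $(1-\varepsilon)$ as the correct bridge linking the $\qq(\varepsilon)$-valued non-semisimplicity condition to the $\zz$-valued coefficient data of $\theta(u)$; once this reduction is in place, the rest is a short combinatorial check.
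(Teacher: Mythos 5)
Your proof is correct, and the decisive step is genuinely different from the paper's. Both arguments reduce, via Higman's theorem and the Wedderburn decomposition, to showing that $\theta(u)\in Q_8$ lies in $\{1,z\}$ (equivalently $g=1$ in the paper's notation $\theta(u)=gz^i$), and both observe that $\rho(u)$ must have a repeated eigenvalue so that $\rho(u_s)$ is scalar. The paper then works with the \emph{trace}: the repeated eigenvalue $\lambda=\tr(\rho(u))/2$ lies in $\zz[\varepsilon]$ and is shown to be a unit there (this uses the auxiliary observation $\lambda^{-1}=\tr(\rho(u^{-1}))/2\in\zz[\varepsilon]$), while $\theta(u)\notin\{1,z\}$ would force $\lambda\in(1-\varepsilon)\zz[\varepsilon]$ and hence $p\mid N_{\qq(\varepsilon)/\qq}(\lambda)$, a contradiction. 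You instead work with the \emph{determinant}: the vanishing discriminant gives $\xi_a^2+\xi_b^2+\xi_c^2=0$ in $\zz[\varepsilon]$, which you reduce modulo the totally ramified prime $(1-\varepsilon)$ to the integer congruence $\sum_{h\in\{a,b,c\}}\bigl(f_h(1)-f_{hz}(1)\bigr)^2\equiv 0\pmod p$; since $\theta(u)$ is a single group element, that sum equals $0$ when $\theta(u)\in\{1,z\}$ and $1$ otherwise, so $p\geq 5$ forces it to vanish. The two routes exploit the same arithmetic input ($\zz[\varepsilon]/(1-\varepsilon)\cong\ff_p$), but yours is slightly more self-contained, avoiding the norm map and the verification that $\lambda$ is a unit, at the cost of a small case count. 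The finer splitting $\qq[Q_8]\cong\qq^4\oplus\hh_\qq$ you invoke is correct but not strictly necessary, since $\theta(u_s)$ being central in $\zz[Q_8]$ is already equivalent to $\theta(u)\in\{1,z\}$.
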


\begin{proof}
Let $u \in \mathcal{U}_1(\zz[G])$ be non-semisimple. Consider the Wedderburn decomposition of $\qq[G]$ that $$\qq[G] \simeq (\qq[C_p]) [Q_8] \simeq \qq[Q_8] \oplus \qq(\varepsilon) [Q_8] \simeq \qq[Q_8] \oplus 4 \qq(\varepsilon) \oplus M_2(\qq(\varepsilon)).$$ Here, $4 \qq(\varepsilon)$ is the direct sum of four copies of $\qq(\varepsilon)$. The two maps $\theta: \qq[G] \to \qq[Q_8]$ and $\rho : \qq[G] \to M_2(\qq[\varepsilon])$ given in Section~\ref{sec2} can be viewed as two projections of this decomposition. It suffices to show that both $\theta(u_s)$ and $\rho(u_s)$ are central. 

Since $u$ is non-semisimple, $\rho(u)$ is a non-semisimple unit in $M_2(\qq[\varepsilon])$. Thus, $\rho(u)$ has repeated eigenvalues. It follows that $\rho(u_s)$ is a diagonal matrix since its minimal polynomial has degree $1$. In particular, $\rho(u_s)$ is central. 

For $\theta(u_s)$, we observe that $\theta(u_s) = \theta(u)$ since $\theta(u_n) = 0$ by Theorem~\ref{thmnilp}. Moreover, $\zz[Q_8]$ has only trivial units by Higman's theorem (see \cite[Proposition~(2.2)]{Seh2}). So $\theta(u) = g z^i$ for some $g \in \{1, a, b, c\}$ and $i \in \{0, 1\}$ since $\aug(\theta(u)) = \aug(u) = 1$. Note that $g z^i = \theta(g z^i)$ and $\ker(\theta|_{\zz[G]}) = (1-t) \zz[G]$. Then $u = g z^i + (1 - t) \beta$ for some $\beta \in \zz[G]$. We will see that $g = 1$ and then $\theta(u)$ is central. 

Note that $\tr(\rho(u)) = (-1)^i \tr(\rho(g)) + (1 - \varepsilon) \tr(\rho(\beta))$ and we deduce that $\tr(\rho(u)) \in (-1)^i \cdot 2 + (1 - \varepsilon) 2 \zz[\varepsilon]$ if $g = 1$; and $\tr(\rho(u)) \in (1 - \varepsilon) 2 \zz[\varepsilon]$ if $g \in \{a, b, c\}$ by Lemma~\ref{lemJan1}. Let $\lambda$ be the repeated eigenvalue of $\rho(u)$. Then $\lambda = \tr(\rho(u)) / 2 \in \zz[\varepsilon]$. Moreover, it is a unit in $\zz[\varepsilon]$ because $\rho(u_s) = \lambda I$ is invertible and $\lambda^{-1} = \tr(\rho(u^{-1})) / 2 \in \zz[\varepsilon]$. Consider the norm map $N = N_{\qq(\varepsilon) / \qq}$, namely, $N(\alpha) = \prod_{\sigma \in \Aut(\qq(\varepsilon) / \qq)} \sigma(\alpha)$ for $\alpha \in \qq(\varepsilon)$. If $g \in \{a, b, c\}$, then $\tr(\rho(g)) = 0$ and $\lambda \in (1 - \varepsilon) \zz[\varepsilon]$. It follows that $N(\lambda)$ is divided by $N(1 - \varepsilon) = p$ in $\zz$. It is impossible because $\lambda$ is a unit and $N(\lambda) = \pm 1$. So we get $g = 1$.
\end{proof}

The above lemma is false when $u$ is not a unit. For instance, let $$u = \sum_{g \in \{a, b, c\}} [1 + t + \cdots + t^{p-1} + \alpha_g(t)] g - \alpha_g(t) g z$$ where $\alpha_g(t) \in \zz[C_p]$ not all zero and $\alpha_a(t)^2 + \alpha_b(t)^2 + \alpha_c(t)^2 = 0$. Such $\alpha_a(t)$, $\alpha_b(t)$ and $\alpha_c(t)$ exist since the multiplicative order of $2$ modulo $p$ is even (see \cite[p. 153]{GiaSeh}). Here, $u$ is not a unit since $\aug(u) = 3 p \neq \pm 1$. Moreover, $u_n = \sum_{g \in \{a, b, c\}} \alpha_g(t) g (1 - z) \neq 0$ and $u$ is non-semisimple. In particular, $u_s = (1 + t + \cdots + t^{p-1}) (a + b + c)$ is not central.

Recall the isomorphism $\Delta: \qq[C_p] \to \qq \oplus \qq(\varepsilon)$ given by $\Delta(t) = (1, \varepsilon)$ and we have the embedding $$\zz[C_p] \hookrightarrow \zz \oplus \zz[\varepsilon].$$ Restricting on units of augmentation $1$, we have the following well-known result.

\begin{prop}[{\cite[Proposition (10.7)]{Seh2}}]
\label{prop3}
Let $$\mathcal{U}_1(\zz[\varepsilon])= \{ y \in \mathcal{U}(\zz[\varepsilon]) \mid y \equiv 1 \pmod{1 - \varepsilon}\},$$ then the map $$\delta: \mathcal{U}_1(\zz[C_p]) \to \mathcal{U}_1(\zz[\varepsilon])$$ given by $t \mapsto \varepsilon$ is a group isomorphism.
\end{prop}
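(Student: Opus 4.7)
The plan is to exploit the embedding $\Delta : \zz[C_p] \hookrightarrow \zz \oplus \zz[\varepsilon]$ recalled just above the statement, and to identify its image with the fibre product $\{(n, y) \in \zz \oplus \zz[\varepsilon] : n \equiv y \pmod{1-\varepsilon}\}$, where $\zz$ maps into $\zz[\varepsilon]/(1-\varepsilon) \cong \ff_p$ in the evident way. This pullback description comes from the factorisation $t^p - 1 = (t-1) \Phi_p(t)$ in $\zz[t]$, since $(t-1)$ and $\Phi_p(t)$ are coprime after reduction modulo any prime $\neq p$. Under $\Delta$ the map $\delta$ becomes the second projection and $\aug$ the first, so everything should be read off from this picture.

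For well-definedness, $u \in \mathcal{U}_1(\zz[C_p])$ forces $u(\varepsilon) \in \mathcal{U}(\zz[\varepsilon])$ because $\delta$ is a ring homomorphism, and the congruence $u(\varepsilon) \equiv u(1) = 1 \pmod{1-\varepsilon}$ is immediate since $\varepsilon \equiv 1 \pmod{1-\varepsilon}$. Injectivity then follows because $\ker(\delta)$ inside $\zz[C_p]$ is the ideal generated by $\hat{C}_p := 1 + t + \cdots + t^{p-1}$; the identity $t \cdot \hat{C}_p = \hat{C}_p$ forces this ideal to coincide with the $\zz$-module $\zz \hat{C}_p$, and comparing augmentations (note $\aug(\hat{C}_p) = p$) of two elements with the same image under $\delta$ kills the integer multiple.

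The main step is surjectivity, and this is where the fibre-product picture does the work. Given $y \in \mathcal{U}_1(\zz[\varepsilon])$, the pair $(1, y)$ satisfies the fibre-product congruence $1 \equiv y \pmod{1-\varepsilon}$ by hypothesis, so it has a unique preimage $u \in \zz[C_p]$ with $\aug(u) = 1$ and $\delta(u) = y$. To upgrade $u$ to a unit, one observes that $y^{-1} \equiv 1 \pmod{1-\varepsilon}$ as well (the units congruent to $1$ form a subgroup), so $(1, y^{-1}) \in \im(\Delta)$ also has a preimage $v \in \zz[C_p]$; then $\Delta(uv) = (1, 1) = \Delta(1)$ and the injectivity of $\Delta$ forces $uv = 1$, so $u \in \mathcal{U}_1(\zz[C_p])$ with $\delta(u) = y$. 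I do not foresee a real obstacle: once the image of $\Delta$ is identified as the stated fibre product everything is formal, the only point worth care being the verification that the lift $u$ of $y$ is actually invertible, which is handled by lifting $y^{-1}$ along the same recipe.
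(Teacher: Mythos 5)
The paper does not prove this statement; it is quoted verbatim from Sehgal's book, so there is no in-paper argument to compare against. Your proposal is a correct proof of the cited fact, and it follows the standard line of reasoning: $\zz[C_p]=\zz[t]/(t^p-1)$ sits as a fibre product over $\zz[t]/(t-1,\Phi_p(t))\cong\ff_p$ inside $\zz\oplus\zz[\varepsilon]$, the congruence condition $y\equiv 1\pmod{1-\varepsilon}$ is precisely the gluing condition paired with augmentation $1$, and surjectivity is obtained by lifting $y$ and $y^{-1}$ separately and using injectivity of $\Delta$ to see that the lifts are mutually inverse. Your identification $\ker(\delta)=\zz\widehat{C}_p$ via $t\widehat{C}_p=\widehat{C}_p$ and the augmentation-count for injectivity on $\mathcal{U}_1$ are both sound. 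The only place you gloss over is why the image of $\Delta$ is exactly the fibre product: the clean justification is the short exact sequence $0\to R/(I\cap J)\to R/I\oplus R/J\to R/(I+J)\to 0$ applied to $I=(t-1)$, $J=(\Phi_p(t))$ in $R=\zz[C_p]$, together with $I\cap J=0$ and $I+J=(t-1,p)$ since $\Phi_p(t)\equiv p\pmod{t-1}$. That is a routine completion and not a genuine gap.
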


Now, for a non-semisimple unit in $\zz[G]$, we can know what its semisimple and nilpotent parts in $\qq[G]$ look like. The following theorem is essentially due to \cite[Theorem~2.1]{WanZho} but their proof looks complicated. Here, we rewrite the statement in a clearer way and prove the result using Lemma~\ref{lem8} and Proposition~\ref{prop2}.

\begin{thm}
\label{thm1}
Let $u \in \mathcal{U}_1(\zz[G])$ and write $u = \sum_{g \in Q_8} f_g(t) g$. Then $u$ is non-semisimple if and only if one of $f_g(t) - f_{gz}(t) \neq 0$ for $g \in \{a, b, c\}$ and $$\sum_{g \in \{a, b, c\}} (f_g(t) - f_{g z}(t))^2 = 0.$$ In this situation, we have the followings.
\begin{itemize}
\item[(i)]
The semisimple and nilpotent parts are $$u_s = f_1(t) + f_z(t) z + \frac{1}{2} \sum_{g \in \{a, b, c\}} (f_g(t) + f_{g z}(t)) g (1 + z)$$ and $$u_n = \frac{1}{2} \sum_{g \in \{a, b, c\}} (f_g(t) - f_{g z}(t)) g (1 - z)$$ with $u_n^2 = 0$.
\item[(ii)]
$(f_1(1), f_z(1)) \in \{(1, 0), (0,1)\}$ and $f_g(1) = 0$ for $g \in Q_8 \setminus \{1, z\}$.
\item[(iii)]
$f_1(t) - f_z(t) \in \mathcal{U}(\zz[C_p])$.
\end{itemize}
\end{thm}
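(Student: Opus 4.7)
The argument combines the two structural results already in place: Lemma~\ref{lem8} (the semisimple part $u_s$ is central in $\qq[G]$) and Proposition~\ref{prop2} (explicit form of nilpotent elements). For necessity, assume $u$ is non-semisimple and write $u=u_s+u_n$ in $\qq[G]$. Since the center of $\qq[Q_8]$ is spanned by $1, z, a+az, b+bz, c+cz$, centrality of $u_s$ forces
\begin{equation*}
u_s = \lambda_1(t) + \lambda_z(t)\,z + \sum_{g\in\{a,b,c\}} \mu_g(t)\,g(1+z)
\end{equation*}
for some $\lambda_1,\lambda_z,\mu_g\in\qq[C_p]$, while Proposition~\ref{prop2} writes $u_n=\sum_{g\in\{a,b,c\}}\beta_g(t)\,g(1-z)$ with $\sum\beta_g(t)^2=0$ and $u_n^2=0$. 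Matching the coefficients of $u=u_s+u_n$ at the eight basis elements of $Q_8$ yields $\lambda_1=f_1$, $\lambda_z=f_z$, $\mu_g=(f_g+f_{gz})/2$, and $\beta_g=(f_g-f_{gz})/2$. Substituting back gives the explicit formulas of (i), the identity $\sum_{g\in\{a,b,c\}}(f_g-f_{gz})^2=0$, and nonvanishing of at least one $f_g-f_{gz}$ (since $u_n\neq 0$).

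\textbf{Converse and part (ii).} Conversely, under the stated hypothesis define $u_s,u_n$ by the formulas of (i). Then $u_s+u_n=u$ by direct expansion, $u_n^2=0$ by Proposition~\ref{prop2}, and $u_s$ commutes with $u_n$ since it is central. Moreover $Z(\qq[G])$ is a finite product of fields, so every central element has squarefree minimal polynomial over $\qq$ and is therefore semisimple; by uniqueness of additive Jordan decomposition this really is the Jordan decomposition of $u$, and $u_n\neq 0$ makes $u$ non-semisimple. For part (ii), apply $\theta$: since $\theta(u_n)=0$ we have $\theta(u)=\theta(u_s)$, and the argument of Lemma~\ref{lem8} showed $\theta(u)\in\{1,z\}$. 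Comparison with $\theta(u)=\sum_{g\in Q_8}f_g(1)\,g$ gives the claimed values of $f_g(1)$.

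\textbf{Main obstacle --- part (iii).} Set $\eta=f_1(t)-f_z(t)\in\zz[C_p]$. The strategy is to show that both coordinates of $\Delta(\eta)=(\eta(1),\eta(\varepsilon))$ are units and then invoke Proposition~\ref{prop3}. The first coordinate $\eta(1)=\pm 1$ by (ii). For the second, the key computation is the trace of $\rho(u)$: by Lemma~\ref{lemJan1} only $1$ and $z$ contribute, and $\rho(1)=I$, $\rho(z)=-I$, so $\tr\rho(u)=2\bigl(f_1(\varepsilon)-f_z(\varepsilon)\bigr)=2\eta(\varepsilon)$. Hence the repeated eigenvalue produced in the proof of Lemma~\ref{lem8} is exactly $\eta(\varepsilon)$, and that proof already established $\eta(\varepsilon)\in\mathcal{U}(\zz[\varepsilon])$ with $\eta(\varepsilon)\equiv\pm 1\pmod{1-\varepsilon}$, the sign matching $\eta(1)$. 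Therefore $\pm\eta\in\zz[C_p]$ has augmentation $1$ and $\delta(\pm\eta)=\pm\eta(\varepsilon)\in\mathcal{U}_1(\zz[\varepsilon])$; Proposition~\ref{prop3}, together with injectivity of $\delta$ on augmentation-$1$ elements, now identifies $\pm\eta$ with the element of $\mathcal{U}_1(\zz[C_p])$ mapping to $\pm\eta(\varepsilon)$, so $\eta\in\mathcal{U}(\zz[C_p])$. The delicate point throughout is preserving integrality, which relies crucially on the refined statement in Lemma~\ref{lem8} that the repeated eigenvalue is a unit of $\zz[\varepsilon]$ rather than merely of $\qq(\varepsilon)$.
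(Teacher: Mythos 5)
Your proof is correct and follows essentially the same route as the paper: centrality of $u_s$ from Lemma~\ref{lem8} gives the class-sum form, Proposition~\ref{prop2} gives the form of $u_n$, coefficient matching yields (i), $\theta(u)=z^i$ gives (ii), and the trace computation together with the unit argument from Lemma~\ref{lem8} plus Proposition~\ref{prop3} gives (iii). The only cosmetic differences are that you justify explicitly why central elements of $\qq[G]$ are semisimple, and you compare $f_g(1)$ directly against $z^i$ rather than routing through the $d_g(1)$; neither changes the substance.
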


\begin{proof}
Assume that $u$ is non-semisimple. By Lemma~\ref{lem8}, $u_s$ is central. Since conjugacy classes of $G$ are $\{t^i\}$, $\{z t^i\}$, $\{g t^i, g z t^i\}$ for $0 \leq i \leq p-1$ and $g \in \{a, b, c\}$, it follows that $u_s = d_1(t) + d_z(t) z + \sum_{g \in \{a, b, c\}} d_g(t) g (1 + z)$ for some $d_g(t) \in \qq[C_p]$. Then $$u_n = (f_1(t) - d_1(t)) + (f_z(t) - d_z(t)) z + \sum_{g \in \{a, b, c\}} (f_g(t) - d_g(t)) g + (f_{gz}(t) - d_g(t)) g z.$$ By Proposition~\ref{prop2}, we have 
\begin{enumerate}
\item[(a)]
$f_1(t) - d_1(t) = f_z(t) - d_z(t) = 0$,
\item[(b)]
$f_{gz}(t) - d_g(t) = - (f_{g}(t) - d_g(t))$ for $g \in \{a, b, c\}$, and
\item[(c)]
$(f_{a}(t) - d_a(t))^2 + (f_{b}(t) - d_b(t))^2 + (f_{c}(t) - d_c(t))^2 = 0$.
\end{enumerate}
It follows that $d_1(t) = f_1(t)$, $d_z(t) = f_z(t)$ and $d_g(t) = \frac{1}{2} (f_g(t) + f_{g z}(t))$ for $g \in \{a, b, c\}$. Clearly, $u_n^2 = 0$ from the Wedderburn decomposition of $\qq[G]$. We obtain (i). Moreover, by (c), we have $$(f_{a}(t) - f_{a z}(t))^2 + (f_{b}(t) - f_{b z}(t))^2 + (f_{c}(t) - f_{c z}(t))^2 = 0$$ via $f_g(t) - d_g(t) = (f_g(t) - f_{g z}(t))/2$ for $g \in \{a, b, c\}$.

By Proposition~\ref{prop2} again, we have $f_{g}(1) - d_g(1) = 0$ for $g \in \{a, b, c\}$. Note that $\theta(u_s) = z^i$ for some $i \in \{0, 1\}$ since $\theta(u_s) = \theta(u)$ and it is a central unit in $\zz[Q_8]$. It follows that $(d_1(1), d_z(1)) = (1, 0)$ if $i = 0$ and $(d_1(1), d_z(1)) = (0, 1)$ if $i = 1$, and $d_g(1) = 0$ for $g \neq 1, z$. Thus, we have $(f_1(1), f_z(1)) \in \{(1, 0), (0,1)\}$, $f_g(1) = 0$ and $f_{g z}(1) = 2 d_g(1) - f_g(1) = 0$ for $g \in \{a, b, c\}$. This gives (ii). 

For (iii), let $w(t) = f_1(t) - f_z(t)$ and we first observe from (i) that $\rho(u_s) = \lambda I$ where $\lambda = w(\varepsilon)$. Note that $\theta(u_s) = z^i$ implies that $w(1) = (-1)^i$. As in the proof of Lemma~\ref{lem8}, we have $\lambda \in \mathcal{U}(\zz[\varepsilon])$ and $\lambda = \tr(\rho(u)) / 2 \in (-1)^i + (1 - \varepsilon) \zz[\varepsilon]$. It follows that $\Delta((-1)^i w(t)) = (1, (-1)^i w(\varepsilon))$ and $(-1)^i w(\varepsilon) \in \mathcal{U}_1(\zz[\varepsilon])$. By Proposition~\ref{prop3}, we can conclude that $(-1)^i w(t) \in \mathcal{U}_1(\zz[C_p])$. Hence, $f_1(t) - f_z(t) \in \mathcal{U}(\zz[C_p])$.

Conversely, assume that $u \in \mathcal{U}_1(\zz[G])$ and $\sum_{g \in \{a, b, c\}} \beta_g^2 = 0$ with some $\beta_g \neq 0$ where $\beta_g = f_g(t) - f_{gz}(t)$ for $g \in \{a, b, c\}$. Then we set $$\alpha = \frac{1}{2} \sum_{g \in \{a, b, c\}} \beta_g g (1 - z) \neq 0.$$ Observe that $\alpha^2 = -\frac{1}{2} (\beta_a^2 + \beta_b^2 + \beta_c^2) (1 - z) = 0$ and $\alpha$ is nilpotent. Moreover, it is easy to see that $u - \alpha$ is central since it is a $\qq$-linear combination of class sums. It follows that $u - \alpha$ is semisimple and $u = (u - \alpha) + \alpha$ is the Jordan decomposition of $u$. In particular, $u$ is non-semisimple.
\end{proof}

By Theorem~\ref{thm1}(i), $\zz[G]$ has MJD if and only if for every non-semisimple unit $u = \sum f_g(t) g$, we have $f_g(t) + f_{g z}(t) \in 2 \zz[C_p]$ for each $g \in \{a, b, c\}$. We will focus on each $f_g(t) + f_{g z}(t)$ when a non-semisimple unit $u$ is given. For convenience, we may multiply $u$ by any central unit $w$ by the following obvious observation.

\begin{lem}
\label{lem9}
Let $H$ be a finite group and $u, w \in \mathcal{U}(\zz[H])$. If $w$ is central, then $(u w)_s = u_s w$ and $(u w)_n = u_n w$. In particular, $u$ is non-semisimple if and only if $u w$ is non-semisimple, and $u_s \in \zz[H]$ if and only if $(u w)_s \in \zz[H]$.
\end{lem}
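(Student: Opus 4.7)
The plan is to verify that $u_s w + u_n w$ is the additive Jordan decomposition of $uw$ in $\qq[H]$, and then invoke uniqueness of this decomposition. I need to check four properties: (a) the sum equals $uw$, (b) $u_s w$ is semisimple, (c) $u_n w$ is nilpotent, and (d) $u_s w$ and $u_n w$ commute. Property (a) follows from distributivity. For (c), centrality of $w$ gives $(u_n w)^k = u_n^k w^k$, which vanishes for $k$ large enough since $u_n$ is nilpotent. For (d), the computation $(u_s w)(u_n w) = u_s u_n w^2 = u_n u_s w^2 = (u_n w)(u_s w)$ uses that $u_s$ commutes with $u_n$ and $w$ is central.

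The only step that requires real thought is (b). My key observation is that $w$ itself is semisimple, because it lies in $Z(\qq[H])$, which is a commutative semisimple $\qq$-algebra, hence a product of fields, since $\qq[H]$ is semisimple by Maschke's theorem. I would then work inside the Wedderburn decomposition $\qq[H] \cong \prod_i M_{n_i}(D_i)$: the element $w$ projects to an invertible central scalar $\lambda_i \in Z(D_i)$ in each factor. If the projection of $u_s$ has separable minimal polynomial $\prod_j (X - \alpha_{ij})$ over the algebraic closure, then the projection of $u_s w$ has minimal polynomial $\prod_j (X - \lambda_i \alpha_{ij})$, which is still separable because $\lambda_i \neq 0$. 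Hence $u_s w$ is semisimple in each Wedderburn factor, and therefore in $\qq[H]$.

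By uniqueness of the additive Jordan decomposition, this yields $(uw)_s = u_s w$ and $(uw)_n = u_n w$. For the "in particular" consequences: $u$ is non-semisimple iff $u_n \neq 0$, which is equivalent to $u_n w \neq 0$ (as $w$ is a unit), i.e., to $uw$ being non-semisimple; and from $(uw)_s = u_s w$ together with $w, w^{-1} \in \zz[H]$, the equivalence $u_s \in \zz[H] \Leftrightarrow (uw)_s \in \zz[H]$ is immediate. The main (and essentially only) obstacle is step (b), but this collapses once one recognises that central elements of $\qq[H]$ are automatically semisimple; the rest of the argument is purely formal.
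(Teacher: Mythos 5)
The paper states this lemma without proof, describing it as an ``obvious observation,'' so there is no official argument to compare against; your job was to supply the missing proof, and what you have is correct. The logical skeleton is exactly right: verify the four defining properties of the additive Jordan decomposition for the pair $(u_s w, u_n w)$ and invoke uniqueness, with (b) being the only nontrivial point and the rest following from centrality of $w$.

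One small imprecision worth tightening in step (b): the minimal polynomial of the projection of $u_s w$ to the $i$-th Wedderburn factor is not literally $\prod_j (X - \lambda_i \alpha_{ij})$, because after extending scalars to $\overline{\qq}$ the factor $M_{n_i}(D_i)\otimes_{\qq}\overline{\qq}$ splits into several simple blocks on which $w$ acts by the various Galois conjugates of $\lambda_i$, not by $\lambda_i$ uniformly. The argument survives, but the cleanest formulation is to pass all the way to $\overline{\qq}[H]\cong\prod_k M_{m_k}(\overline{\qq})$: there $u_s$ becomes a diagonalizable matrix in each block and $w$, being central, becomes a nonzero scalar $\mu_k$ in each block; the product $u_s w$ is then $\mu_k$ times a diagonalizable matrix, hence diagonalizable, hence semisimple. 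Equivalently, one can note that $\qq[u_s, w]$ is a commutative subalgebra generated by two commuting semisimple elements, hence (since $\qq$ is perfect) is itself semisimple, so every element of it --- in particular $u_s w$ --- is semisimple. Either rewording avoids the need to describe the minimal polynomial explicitly. Your identification of ``$w$ central $\Rightarrow$ $w$ semisimple'' as the key observation is exactly the right insight, and the ``in particular'' deductions are handled correctly.
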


Let $u \in \mathcal{U}_1(\zz[G])$ be non-semisimple and write $u = \sum_{g \in Q_8} f_g(t) g$. By Theorem~\ref{thm1}(iii), $f_1(t) - f_z(t)$ is a unit in $\zz[C_p]$. Note that it is also a central unit in $\zz[G]$. By Theorem~\ref{thm1}(ii), either $(f_{1}(1), f_z(1)) = (1, 0)$ or $(f_{1}(1), f_z(1)) = (0, 1)$. If $f_1(1) = 1$, then let $w = (f_1(t) - f_z(t))^{-1}$; if $f_z(1) = 1$, then let $w = - z (f_1(t) - f_z(t))^{-1}$. Thus, $w$ is a central unit of augmentation $1$ in $\zz[G]$ and $u w \in \mathcal{U}_1(\zz[G])$. By Lemma~\ref{lem9}, $u_s \in \zz[G]$ if and only if $(u w)_s \in \zz[G]$. Write $u w = \sum_{g \in Q_8} h_g(t) g$ and we have $h_1(t) - h_z(t) = 1$. We now consider that $$\mathcal{V} = \left\{v \in \mathcal{U}_1(\zz[G]) \mid v_n \neq 0 \text{ and $h_1(t) - h_z(t) = 1$ where } v = \sum_{g \in Q_8} h_g(t) g \right\}.$$ Then we have proved the following result.

\begin{prop}
\label{prop4}
For any non-semisimple $u \in \mathcal{U}_1(\zz[G])$, there exists a central $w \in \mathcal{U}_1(\zz[G])$ such that $u w \in \mathcal{V}$. Moreover, if $v_s \in \zz[G]$ for each $v \in \mathcal{V}$, then $\zz[G]$ has MJD.
\end{prop}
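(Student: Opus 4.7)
The plan is to execute the explicit construction sketched in the paragraph immediately before the statement, using Theorem~\ref{thm1} for the structural facts about non-semisimple units and Lemma~\ref{lem9} to transport Jordan parts across multiplication by central units.

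Fix a non-semisimple $u = \sum_{g \in Q_8} f_g(t) g \in \mathcal{U}_1(\zz[G])$. Theorem~\ref{thm1}(iii) guarantees $f_1(t) - f_z(t) \in \mathcal{U}(\zz[C_p])$, which is automatically a central unit of $\zz[G]$, while Theorem~\ref{thm1}(ii) forces $(f_1(1), f_z(1)) \in \{(1,0),(0,1)\}$. In the two cases I set
\[
w = \begin{cases}(f_1(t) - f_z(t))^{-1}, & f_1(1) = 1, \\ -z\,(f_1(t) - f_z(t))^{-1}, & f_z(1) = 1,\end{cases}
\]
and verify by a direct augmentation computation that $w \in \mathcal{U}_1(\zz[G])$ is central in either case (the factor $-z$ in the second case is inserted precisely to compensate for $\aug(f_1 - f_z) = -1$). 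Writing $uw = \sum h_g(t) g$, the first case gives $h_g = f_g w$ and so $h_1 - h_z = (f_1-f_z)w = 1$; in the second, right-multiplication by $z$ swaps coefficients at $g$ and $gz$, producing $h_g = -f_{gz} w$ and once again $h_1 - h_z = 1$. Lemma~\ref{lem9} forces $(uw)_n = u_n w \neq 0$ since $w$ is a unit, so $uw \in \mathcal{V}$.

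For the moreover claim, take $\alpha \in \mathcal{U}(\zz[G])$. After possibly replacing $\alpha$ by $-\alpha$ (which merely flips the signs of the Jordan parts) I may assume $\alpha \in \mathcal{U}_1(\zz[G])$, and only the non-semisimple case needs attention. The first part produces a central $w \in \mathcal{U}_1(\zz[G])$ with $\alpha w \in \mathcal{V}$, the hypothesis gives $(\alpha w)_s \in \zz[G]$, and Lemma~\ref{lem9} then yields $\alpha_s = (\alpha w)_s w^{-1} \in \zz[G]$. The unipotent part follows from $\alpha_u^{-1} = \alpha^{-1}\alpha_s \in \zz[G]$: since $\alpha_u^{-1} - 1$ is nilpotent, inverting via a terminating geometric series gives $\alpha_u \in \zz[G]$ as well, whence $\zz[G]$ has MJD. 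The only substantive obstacle is the bookkeeping of signs and augmentations in the two cases defining $w$; everything else is a direct application of the two cited results.
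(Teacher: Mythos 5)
Your proof is correct and follows essentially the same route as the paper: the same choice of central unit $w$ (built from $(f_1 - f_z)^{-1}$ with a $-z$ correction in the $(0,1)$ case), verified via Theorem~\ref{thm1}(ii)--(iii) and transported by Lemma~\ref{lem9}. You are slightly more explicit than the paper in spelling out the ``moreover'' direction (the sign normalization from $\mathcal{U}$ to $\mathcal{U}_1$ and the geometric-series argument recovering $\alpha_u \in \zz[G]$ from $\alpha_s \in \zz[G]$), but this is standard bookkeeping the paper leaves implicit rather than a different approach.
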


From now on, we will focus on non-semisimple units in $\mathcal{V}$. Before we end this section, we give the following result which will be used in Section~\ref{sec5}.

\begin{lem}
\label{lem7}
Let $u \in \mathcal{V}$. Then $u^{-1} = u_s^{-1} - u_n$ is the Jordan decomposition of $u^{-1}$. In particular, $u_s + u_s^{-1} \in \zz[G]$. 
\end{lem}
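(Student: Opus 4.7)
The plan is to reduce everything to one key identity, $u_s u_n = u_n$ for every $u \in \mathcal{V}$, and then let the remaining claims fall out from the additive Jordan decomposition $u = u_s + u_n$ of Theorem~\ref{thm1}(i). I expect the main obstacle to be this identity: it is the one place where the normalization $f_1(t) - f_z(t) = 1$ built into the definition of $\mathcal{V}$ has to be used, and everything afterwards is formal manipulation.

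For the key identity, I would use Theorem~\ref{thm1}(i) to split
$$u_s = A + B, \qquad A := f_1(t) + f_z(t)\, z, \qquad B := \tfrac{1}{2} \sum_{g \in \{a,b,c\}} \bigl(f_g(t) + f_{gz}(t)\bigr)\, g (1 + z).$$
Since $u_n$ has $(1-z)$ as a right factor and $(1+z)(1-z) = 0$, one immediately gets $B\, u_n = 0$. The element $A$ lies in the center of $\qq[G]$, and the defining condition of $\mathcal{V}$ yields $A(1-z) = (f_1(t) - f_z(t))(1-z) = 1 - z$; distributing $A$ across each summand of $u_n$ therefore leaves $u_n$ unchanged. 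Hence $u_s u_n = A u_n = u_n$, and multiplying on the left by $u_s^{-1}$ gives the equivalent identity $u_s^{-1} u_n = u_n$.

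With this in hand, the centrality of $u_s$ (Lemma~\ref{lem8}) together with $u_n^2 = 0$ lets me compute the inverse directly: the unipotent factor $1 + u_s^{-1} u_n$ appearing in the multiplicative decomposition of $u$ has inverse $1 - u_s^{-1} u_n$, so
$$u^{-1} = (1 - u_s^{-1} u_n)\, u_s^{-1} = u_s^{-1} - u_s^{-2} u_n = u_s^{-1} - u_n,$$
where the last equality applies $u_s^{-1} u_n = u_n$ twice. Because $u_s^{-1}$ is a central semisimple unit, $-u_n$ is nilpotent, and the two commute, this expression identifies $u_s^{-1}$ and $-u_n$ as the semisimple and nilpotent parts of $u^{-1}$, which is the first assertion.

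Adding the two decompositions then cancels the nilpotent contributions, giving $u + u^{-1} = u_s + u_s^{-1}$. Since $u$ and $u^{-1}$ both already lie in $\zz[G]$, so does $u_s + u_s^{-1}$, which is the second assertion.
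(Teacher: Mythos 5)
Your proof is correct and takes essentially the same approach as the paper: both arguments reduce to the key identity $u_s u_n = u_n$, obtained from the normalization $f_1(t) - f_z(t) = 1$ together with the fact that $u_n$ lives in the $(1-z)$-component, and then compute $u^{-1}$ explicitly. The only cosmetic difference is that the paper rewrites $u_s$ in terms of the central idempotents $\tfrac{1-z}{2}$, $\tfrac{1+z}{2}$ and checks $(u_s + u_n)(u_s^{-1} - u_n) = 1$ by direct multiplication, whereas you keep the form of $u_s$ from Theorem~\ref{thm1}(i), compute $A(1-z)$ directly, and invert the unipotent factor $1 + u_s^{-1}u_n$; these are equivalent.
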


\begin{proof}
Let $u \in \mathcal{V}$ and write $u = \sum_{g \in Q_8} f_g(t) g$. By Theorem~\ref{thm1}(i), we can write $$u_s = (f_1(t) - f_z(t)) \frac{1-z}{2} + \sum_{g \in \{1, a, b, c\}} (f_g(t) + f_{g z}(t)) g \frac{1+z}{2}$$ and $u_n \in \zz[G] (\frac{1-z}{2})$. Thus, we have $u_s u_n = u_n u_s = u_n$ since $f_1(t) - f_z(t) = 1$ and $\frac{1-z}{2}$, $\frac{1+z}{2}$ are primitive central idempotents in $\qq[G]$. Multiplying $u_s^{-1}$ on both sides, we obtain $u_n = u_s^{-1} u_n = u_n u_s^{-1}$. It follows that $(u_s + u_n) (u_s^{-1} - u_n) = 1$ and $(u_s^{-1} - u_n) (u_s + u_n) = 1$ since $u_n^2 = 0$. Hence, $u^{-1} = u_s^{-1} - u_n$ and it is also the Jordan decomposition of $u^{-1}$ in $\qq[G]$. Finally, $u_s + u_s^{-1} = u + u^{-1} \in \zz[G]$. 
\end{proof}

\section{$*$-invariant and the Proof of Theorem \ref{thm2}}
\label{sec4}

Let $*$ be the involution of $\qq[G]$ induced by $g \mapsto g^{-1}$ for group elements $g \in G$. An element $\alpha$ in $\qq[G]$ is called $*$-invariant if $\alpha^* = \alpha$. Let $\mathcal{V}$ be defined as above Proposition~\ref{prop4} and let $u \in \mathcal{V}$. Then $u$ is non-semisimple and $f_1(t) - f_z(t) = 1$ when we write $u = \sum_{g \in Q_8} f_g(t) g$. Moreover, we have $f_1(1) = 1$, $f_g(1) = 0$ for $g \neq 1$ by Theorem~\ref{thm1}(ii). For convenience, we set $$F_g = f_g(t) + f_{g z}(t) \quad \text{for } g \in \{1, a, b, c\}.$$ As we mentioned before, if we can show that $F_g \in 2 \zz[C_p]$ for $g \in \{a, b, c\}$, then $u_s \in \zz[G]$ by Theorem~\ref{thm1}(i) and $\zz[G]$ will have MJD. In this section, we will prove that $F_g^* = F_g$ for each $g$. Once we have this, then we can prove Theorem~\ref{thm2}.

Recall the homomorphism $\phi : \qq[G] \to \qq[G/G']$ by sending $a \mapsto \overline{a}$, $b \mapsto \overline{b}$ and $t \mapsto t$, and $G / G' = \langle \overline{a} \rangle \times \langle \overline{b} \rangle \times \langle t \rangle = C_2 \times C_2 \times C_p$. Then $$\phi(u) = F_1 + F_a \overline{a} + F_b \overline{b} + F_c \overline{c}$$ is a unit in $\zz[C_2 \times C_2 \times C_p]$. Note also that $\qq[C_2 \times C_2 \times C_p] \simeq 4 \qq[C_p]$ and this isomorphism can be obtained by sending $$\overline{a} \mapsto (1, 1, -1, -1), \quad \overline{b} \mapsto (1, -1, 1, -1), \quad \text{and} \quad t \mapsto (t, t, t, t).$$ Via this isomorphism, we have $\zz[C_2 \times C_2 \times C_p] \hookrightarrow 4 \zz[C_p]$ and we obtain four units in $\zz[C_p]$, namely $$\phi(u) \mapsto (w_1, w_a, w_b, w_c)$$ where \begin{align*} \left\{\begin{array}{ccl}w_1 & = & F_1 + F_a + F_b + F_c, \\ w_a & = & F_1 + F_a - F_b - F_c, \\ w_b & = & F_1 - F_a + F_b - F_c, \\ w_c & = & F_1 - F_a - F_b + F_c. \end{array} \right. \tag{EQ} \label{sec4eq1}\end{align*}  
Since $f_1(1) = 1$ and $f_g(1) = 0$ for $g \in Q_8 \setminus \{1\}$, it follows that $\aug(F_1) = 1$ and $\aug(F_a) = \aug(F_b) = \aug(F_c) = 0$. Thus, $$w_1, w_a, w_b, w_c \in \mathcal{U}_1(\zz[C_p]).$$ In fact, these four units were used to prove the case $p=5$ in \cite{WanZho}.

For $\alpha \in \zz[C_p]$, we say that $\alpha \equiv 0 \pmod{n}$ for $n \in \nn$ if $\alpha \in n \zz[C_p]$. Applying Theorem~\ref{thm1}, we have the following result.

\begin{lem}
\label{lem11}
Let $u \in \mathcal{V}$ and $f_g, F_g$ be as above. We have
\begin{itemize}
\item[(i)]
$F_a^2 + F_b^2 + F_c^2 \equiv 0 \pmod{4}$ and
\item[(ii)]
$F_a + F_b + F_c \equiv 0 \pmod{2}.$
\end{itemize}
\end{lem}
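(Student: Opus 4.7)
The plan is to deduce both claims from a single input: Theorem~\ref{thm1} tells us that, setting $\beta_g = f_g(t) - f_{gz}(t)$ for $g \in \{a, b, c\}$, the nilpotence relation
\[
\beta_a^2 + \beta_b^2 + \beta_c^2 = 0
\]
holds in $\zz[C_p]$. The bridge to the statements about $F_g = f_g(t) + f_{gz}(t)$ is the pair of identities $F_g^2 - \beta_g^2 = 4 f_g(t) f_{gz}(t)$ and $F_g - \beta_g = 2 f_{gz}(t)$, which together give $F_g \equiv \beta_g \pmod{2}$ and $F_g^2 \equiv \beta_g^2 \pmod{4}$ in $\zz[C_p]$.

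For (i), I would sum the identity $F_g^2 = \beta_g^2 + 4 f_g(t) f_{gz}(t)$ over $g \in \{a, b, c\}$; the nilpotence relation kills the $\beta_g^2$ terms and leaves $F_a^2 + F_b^2 + F_c^2 = 4\bigl(f_a(t) f_{az}(t) + f_b(t) f_{bz}(t) + f_c(t) f_{cz}(t)\bigr) \in 4\zz[C_p]$, which is exactly (i).

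For (ii), reducing (i) modulo $2$ and using the characteristic-two identity $(x + y + z)^2 \equiv x^2 + y^2 + z^2 \pmod{2}$ in the commutative ring $\zz[C_p]$ gives $(F_a + F_b + F_c)^2 \equiv 0 \pmod{2}$. The one step that is not mere algebra is to extract $F_a + F_b + F_c \equiv 0 \pmod{2}$ itself from the vanishing of its square modulo $2$. For this I would pass to the quotient $\ff_2[C_p] \cong \ff_2[x]/(x^p - 1)$: since $p$ is odd, $x^p - 1$ is separable over $\ff_2$, so by the Chinese Remainder Theorem $\ff_2[C_p]$ is a product of finite fields and is therefore reduced. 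The image of $F_a + F_b + F_c$ in this ring squares to zero, hence is itself zero, which is (ii).

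The main (mild) obstacle is recognizing that appealing to the reducedness of $\ff_2[C_p]$—rather than attempting some integral manipulation inside $\zz[C_p]$—is the natural way to promote a square-vanishing statement modulo $2$ to a statement about the element itself; everything else is squaring, parity, and the nilpotence relation already delivered by Theorem~\ref{thm1}.
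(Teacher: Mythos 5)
Your argument for part (i) matches the paper's proof essentially verbatim: both use the identity $F_g^2 = (f_g - f_{gz})^2 + 4 f_g(t) f_{gz}(t)$ and then kill the $\sum (f_g - f_{gz})^2$ term via the nilpotence relation of Theorem~\ref{thm1}. For part (ii) you take a genuinely different, though equally valid, route. You observe that modulo $2$ the Frobenius gives $(F_a + F_b + F_c)^2 \equiv F_a^2 + F_b^2 + F_c^2 \equiv 0$ and then invoke the structural fact that $\ff_2[C_p] \cong \ff_2[x]/(x^p-1)$ is reduced (because $p$ is odd, so $x^p-1$ is separable over $\ff_2$), to conclude that the square root itself vanishes. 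The paper instead stays at the level of integer coefficients: writing $F_g = \sum_{i=0}^{p-1} g_i t^i$ it computes $F_g^2 \equiv \sum_i g_i t^{2i} \pmod 2$ (using $g_i^2 \equiv g_i$), so that $F_a^2 + F_b^2 + F_c^2 \equiv \sum_i (a_i + b_i + c_i) t^{2i}$, and then reads off $a_i + b_i + c_i \equiv 0 \pmod 2$ for every $i$ because $i \mapsto 2i$ permutes $\{0,\dots,p-1\}$ modulo $p$. Both arguments are really exploiting the same underlying fact, namely that Frobenius on $\ff_2[C_p]$ is a bijection since $\gcd(2,p) = 1$; yours frames this as ``reduced ring,'' the paper's as ``$2i$ runs over all residues.'' Your version is more conceptual and self-evidently generalizes; the paper's version has the advantage of being elementary and, more to the point, of establishing the explicit congruence $F_g^{2} \equiv \sum_i g_i t^{2i} \pmod 2$, which is reused (iterated to $F_g^{2^k} \equiv \sum_i g_i t^{2^k i}$) in the proofs of Theorems~\ref{thm2sec4} and~\ref{thm3}.
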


\begin{proof}
(i) Since $F_g = f_g(t) + f_{g z}(t)$ and $\sum_{g \in \{a, b, c\}} (f_g(t) - f_{g z}(t))^2 = 0$ by Theorem~\ref{thm1}, the result follows from $$F_a^2 + F_b^2 + F_c^2 = 4 (f_a(t) f_{az}(t) + f_b(t) f_{bz}(t) + f_c(t) f_{cz}(t)).$$

(ii) For each $g \in \{a, b, c\}$, we write $$F_g = \sum_{i=0}^{p-1} g_i t^i \mbox{ for } g_i \in \zz.$$ For instance, $F_a = a_0 + a_1 t + \cdots + a_{p-1} t^{p-1}$ and $F_b, F_c$ are similar. Then $$F_g^2 = \left( \sum_{i=0}^{p-1} g_i t^i \right)^2 = \sum_{i=0}^{p-1} g_i^2 t^{2i} + 2 \sum_{i < j} g_i g_j t^{i+j} \equiv \sum_{i=0}^{p-1} g_i t^{2i} \pmod{2}$$ since $g_i^2 \equiv g_i \pmod{2}$. We have $$F_a^2 + F_b^2 + F_c^2 \equiv \sum_{i=0}^{p-1} (a_i + b_i + c_i) t^{2 i} \pmod{2}.$$ Note that we also have $F_a^2 + F_b^2 + F_c^2 \equiv 0 \pmod{2}$ by (i). It follows that $a_i + b_i + c_i \equiv 0 \pmod{2}$ for all $i$ since $2i$ runs over all $\{0, 1, \ldots, p-1\}$. Thus, $F_a + F_b + F_c \equiv 0 \pmod{2}$ and the result follows.
\end{proof}

We need a well-known result to prove the next proposition. The following lemma is a special case of \cite[Lemma~(2.10) (Cliff-Sehgal-Weiss)]{Seh2}.

\begin{lem}
\label{lem10}
When $p$ is an odd prime, we have $$\mathcal{U}_1(\zz[C_p]) = C_p \times \mathcal{U}_2(\zz[C_p]) \quad \text{and} \quad \mathcal{U}_2(\zz[C_p]) = \mathcal{U}_*(\zz[C_p])$$ where $$\mathcal{U}_2(\zz[C_p]) = \{v \in \mathcal{U}_1(\zz[C_p]) \mid v \equiv 1 \hspace{-0.5em} \mod (t-1)^2\}$$ and $$\mathcal{U}_*(\zz[C_p]) = \{v \in \mathcal{U}_1(\zz[C_p]) \mid v^* = v\}.$$
\end{lem}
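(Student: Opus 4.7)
The plan is to prove the two assertions in turn: first the direct product decomposition, then the identification $\mathcal{U}_2(\zz[C_p]) = \mathcal{U}_*(\zz[C_p])$.

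For the decomposition, I would analyze the quotient $R = \zz[C_p]/(t-1)^2$. Writing $s = t - 1$, the relation $(1+s)^p = 1$ combined with $s^2 = 0$ forces $p s = 0$ in $R$, so $R \cong \zz \oplus (\zz/p) s$ as abelian groups. A direct computation shows that the augmentation-$1$ units of $R$ are exactly $\{1 + ks : k \in \ff_p\}$, a cyclic group of order $p$ generated by $1 + s$. Reduction modulo $(t-1)^2$ therefore yields a group homomorphism $\mathcal{U}_1(\zz[C_p]) \to C_p$ with kernel $\mathcal{U}_2(\zz[C_p])$, and the image of $t$ is $1+s$, a generator. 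Since $\langle t\rangle \cong C_p$ maps isomorphically onto the quotient (from $t^i \equiv 1 \pmod{(t-1)^2}$ one extracts $i s \equiv 0$ in $R$, hence $p \mid i$), the sequence splits. The ambient ring being commutative, the split extension is the desired direct product.

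For the easy inclusion $\mathcal{U}_*(\zz[C_p]) \subseteq \mathcal{U}_2(\zz[C_p])$, write $u \in \mathcal{U}_*$ as $u = c_0 + \sum_{i=1}^{(p-1)/2} c_i(t^i + t^{-i})$. The identity $t^i + t^{-i} - 2 = t^{-i}(t^i - 1)^2$ shows $t^i + t^{-i} \equiv 2 \pmod{(t-1)^2}$, so $u \equiv c_0 + 2\sum_{i \geq 1} c_i = \aug(u) = 1 \pmod{(t-1)^2}$, confirming $u \in \mathcal{U}_2$.

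The reverse inclusion $\mathcal{U}_2 \subseteq \mathcal{U}_*$ I would reduce to the factorization claim: every $u \in \mathcal{U}_1(\zz[C_p])$ can be written as $u = t^i v$ with $v^* = v$. Granted this, for $u \in \mathcal{U}_2$ one has $v \in \mathcal{U}_* \subseteq \mathcal{U}_2$, whence $t^i = u v^{-1} \in \mathcal{U}_2$, which by the first part forces $t^i = 1$ and $u = v \in \mathcal{U}_*$. To establish the factorization, apply the embedding $\delta$ of Proposition~\ref{prop3} and consider $y = u/u^*$, whose image in $\zz[\varepsilon]$ satisfies $y \bar y = 1$. Because complex conjugation lies in the abelian Galois group $\Gal(\qq(\varepsilon)/\qq)$, every conjugate of $y$ also has complex absolute value $1$, so Kronecker's theorem forces $y$ to be a root of unity in $\qq(\varepsilon)$, necessarily of the form $\pm\varepsilon^j$. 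Pulling back, $u/u^* = \pm t^j$ in $\qq[C_p]$; the sign is positive because $u/u^* \equiv 1 \pmod{t-1}$, and since $p$ is odd the congruence $j \equiv 2i \pmod{p}$ is solvable, giving $v := t^{-i} u$ with $v^* = t^i u^* = t^i \cdot u \cdot t^{-2i} = t^{-i} u = v$.

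The main obstacle is precisely the Kronecker-theorem step: the arithmetic fact that an algebraic integer all of whose conjugates have absolute value $1$ must be a root of unity is the nontrivial input without which one cannot pin down $u/u^*$, and it is here that all the genuine number-theoretic content of the lemma is concentrated.
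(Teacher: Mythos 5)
Your proof is correct, and a small caveat in the framing of the task: the paper does not actually prove this lemma, but simply cites it as a special case of Cliff--Sehgal--Weiss (Lemma~(2.10) in Sehgal's book), so there is no ``paper proof'' to compare against line by line. What you have done is supply a complete, self-contained argument for the special case $C_p$, and it follows the classical line of reasoning behind the cited result. Your decomposition step via the quotient ring $R = \zz[C_p]/(t-1)^2 \cong \zz \oplus (\zz/p\zz)\,s$ is clean: the augmentation-one units of $R$ form a cyclic group of order $p$ generated by $1+s$, the reduction map has kernel exactly $\mathcal{U}_2$, and $\langle t\rangle$ meets $\mathcal{U}_2$ trivially and surjects onto the quotient, giving the internal direct product. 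The inclusion $\mathcal{U}_* \subseteq \mathcal{U}_2$ via $t^i + t^{-i} - 2 = t^{-i}(t^i-1)^2$ is exactly right. For the reverse inclusion, the reduction to the factorization $u = t^i v$ with $v^* = v$ is the right move, and the key arithmetic input is correctly isolated: $y = u/u^*$ is a unit in $\zz[C_p]$, $\delta(y)\overline{\delta(y)} = 1$, the abelianness of $\Gal(\qq(\varepsilon)/\qq)$ makes all conjugates of $\delta(y)$ have absolute value one, and Kronecker's theorem forces $\delta(y) = \pm\varepsilon^j$; the sign is pinned down by $\aug(y) = 1$ together with $-1 \notin \mathcal{U}_1(\zz[\varepsilon])$ (or, as you note, $y \equiv 1 \pmod{t-1}$), and solvability of $j \equiv 2i \pmod p$ uses that $p$ is odd. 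One could make the ``pulling back'' step a touch more explicit by invoking Proposition~\ref{prop3} (that $\delta$ restricts to an isomorphism $\mathcal{U}_1(\zz[C_p]) \to \mathcal{U}_1(\zz[\varepsilon])$) to conclude $y = t^j$ in $\zz[C_p]$ from $\delta(y) = \varepsilon^j$, but the substance is already there. This is essentially the argument one finds in Sehgal's treatment, so while it is ``different from the paper'' only in the trivial sense that the paper outsources the proof, it is not a novel route -- just a correct and appropriately elementary rendering of the standard one.
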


\begin{prop}
\label{propFeb1}
Let $F_1, F_a, F_b, F_c$ and $w_1, w_a, w_b, w_c$ be as above.
\begin{itemize}
\item[(i)]
All units $w_1, w_a, w_b, w_c$ are in $\mathcal{U}_2(\zz[C_p])$ and so they are $*$-invariant.
\item[(ii)]
All elements $F_1, F_a, F_b, F_c$ are $*$-invariant. 
\end{itemize}
\end{prop}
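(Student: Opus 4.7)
The plan is to prove (i) first; then the $*$-invariance of each $w_g$ in (i) follows immediately from Lemma~\ref{lem10}, and (ii) follows from (i) by inverting (\ref{sec4eq1}): the $4\times 4$ sign matrix is Hadamard with determinant $\pm 16$, so $4F_g=w_1\pm w_a\pm w_b\pm w_c$ with signs matching the appropriate character on $Q_8/\langle z\rangle$, and $w_i^*=w_i$ then forces $F_g^*=F_g$ in $\qq[C_p]$, hence in $\zz[C_p]$.

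For the core assertion $w_g\in\mathcal{U}_2(\zz[C_p])$: each $w_g$ first lies in $\mathcal{U}_1(\zz[C_p])$ because $w_g$ is the image of the unit $u$ under the ring homomorphism $\pi_g\colon\zz[G]\to\zz[C_p]$ induced by the appropriate sign character of $Q_8/\langle z\rangle$, and $w_g(1)=1$ by Theorem~\ref{thm1}(ii). Applying the isomorphism $\delta$ of Proposition~\ref{prop3}, membership in $\mathcal{U}_2$ translates to $w_g(\varepsilon)\equiv 1\pmod{(1-\varepsilon)^2}$ in $\zz[\varepsilon]$, i.e., to $w_g'(1)\equiv 0\pmod p$. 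Inverting (\ref{sec4eq1}) modulo the odd prime $p$, this reduces to the four congruences $F_h'(1)\equiv 0\pmod p$ for $h\in\{1,a,b,c\}$.

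To secure those derivative congruences, the plan is to combine three inputs. First, Lemma~\ref{lem11}(ii) gives $w_g\equiv w_1\pmod 2$ for $g\in\{a,b,c\}$ via the explicit identity $w_g-w_1=-2(F_h+F_k)$ for suitable $h,k$. Second, Lemma~\ref{lem7} (integrality of $u+u^{-1}=u_s+u_s^{-1}$) yields $F_g+H_g\in 2\zz[C_p]$ for $g\in\{a,b,c\}$, where $H_g$ denotes the coefficient of $\bar g$ in $\phi(u)^{-1}$. Third, the nilpotency identity $\sum_{g\in\{a,b,c\}}(f_g-f_{gz})^2=0$ from Theorem~\ref{thm1}, on differentiating twice in $\zz[t]$ and reducing modulo $t^p-1$, produces $\sum_g\xi_g'(1)^2\equiv 0\pmod p$ where $\xi_g=f_g-f_{gz}$; Lemma~\ref{lem11}(i) should then refine the mod-$4$ analysis.

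The main technical obstacle is fusing these parity and mod-$p$ ingredients into the sharp congruence $F_h'(1)\equiv 0\pmod p$: the hardest piece is the bridge between the $\xi_g$-type data (arising from the nilpotency constraint) and the $F_h$-type data (which is the object of the proposition). Once that bridge is in place, Parts (i) and (ii) follow along the lines above.
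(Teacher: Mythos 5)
Your reduction of (ii) to (i) is correct and matches the paper (invert the Hadamard system, use that $*$ is $\zz$-linear and $\zz[C_p]$ is torsion-free). Your observation that each $w_g \in \mathcal{U}_1(\zz[C_p])$, being the image of $u$ under a sign character of $Q_8/\langle z\rangle$ and having augmentation $1$, is also correct. But your plan for the core step, showing $w_g \in \mathcal{U}_2$, has a genuine gap that you yourself flag (``the main technical obstacle is fusing these parity and mod-$p$ ingredients''), and moreover it is pointed in a much harder direction than needed.

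The derivative-condition route ($w_g'(1)\equiv 0 \pmod p$, then $F_h'(1)\equiv 0\pmod p$) is not where the proof lives. None of the three ingredients you list plausibly produces a mod-$p$ congruence on the derivatives: Lemma~\ref{lem11} is purely mod-$2$ information; Lemma~\ref{lem7} gives $F_g+H_g\equiv 0\pmod 2$, again a parity statement; and differentiating the nilpotency identity $\sum_{g}(f_g-f_{gz})^2=0$ controls $\xi_g=f_g-f_{gz}$, not $F_g=f_g+f_{gz}$, so there is no bridge of the kind you need. You would be left trying to manufacture a mod-$p$ fact from mod-$2$ data, which is exactly the obstruction you acknowledge.

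What you are missing is the structural half of Lemma~\ref{lem10}: not only is $\mathcal{U}_2=\mathcal{U}_*$, but $\mathcal{U}_1(\zz[C_p])=C_p\times\mathcal{U}_2(\zz[C_p])$. With that in hand, the argument stays entirely mod $2$. First, $F_1=1+2f_z(t)$ (since $f_1-f_z=1$) and $F_a+F_b+F_c\equiv 0\pmod 2$ (Lemma~\ref{lem11}(ii)) give $w_g\equiv 1\pmod 2$ for all four $g$, and hence also $w_g^*\equiv 1\pmod 2$. Now write $w_g=t^{-i_g}v_g$ with $v_g\in\mathcal{U}_2=\mathcal{U}_*$; applying $*$ gives $w_g^*=t^{2i_g}w_g$, and reducing mod $2$ forces $t^{2i_g}\equiv 1$ in $\mathbb{F}_2[C_p]$, so $2i_g\equiv 0\pmod p$ and $i_g=0$. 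Thus $w_g\in\mathcal{U}_2$, with no reference to $\delta$, $\zz[\varepsilon]$, or derivative congruences at all. I'd suggest replacing the mod-$p$ program with this direct-product argument; the parity facts you already collected (via Lemma~\ref{lem11}(ii)) are the right ones, you just need to feed them into $\mathcal{U}_1=C_p\times\mathcal{U}_2$ rather than into a derivative computation.
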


\begin{proof}
(i) Note that $F_1 = f_1(t) + f_z(t) = 1 + 2 f_z(t)$ since $f_1(t) - f_z(t) = 1$. Moreover, $F_a + F_b + F_c \equiv 0 \pmod{2}$ by Lemma~\ref{lem11}(ii). It follows that $$w_g \equiv 1 \pmod{2}$$ for each $g \in \{1, a, b, c\}$ by equations (\ref{sec4eq1}). If we write $w_g = 1 + 2\alpha_g$ for $\alpha_g \in \zz[C_p]$, then we have $w_g^* = 1 + 2 \alpha_g^*$. Since $\alpha_g^* \in \zz[C_p]$, we can deduce that $w_g^* \equiv 1 \pmod{2}$. According to Lemma~\ref{lem10}, we have $\mathcal{U}_1(\zz[C_p]) = C_p \times \mathcal{U}_2(\zz[C_p])$. Since $w_g \in \mathcal{U}_1(\zz[C_p])$, there exists a unique $i_g \in \{0, 1, \ldots, p-1\}$ such that $t^{i_g} w_g \in \mathcal{U}_2(\zz[C_p]) = \mathcal{U}_*(\zz[C_p])$. We have $t^{i_g} w_g = (t^{i_g} w_g)^* = w_g^* t^{-i_g}$. Then we obtain $t^{i_g} \equiv t^{-i_g} \pmod{2}$. It follows that $i_g = 0$. Hence, $w_g \in \mathcal{U}_2(\zz[C_p]) = \mathcal{U}_*(\zz[C_p])$, as desired. 

(ii) We observe from (\ref{sec4eq1}) that \begin{align*} \left\{\begin{array}{ccl} 4 F_1 & = & w_1 + w_a + w_b + w_c, \\ 4 F_a & = & w_1 + w_a - w_b - w_c, \\ 4 F_b & = & w_1 - w_a + w_b - w_c, \\ 4 F_c & = & w_1 - w_a - w_b + w_c. \end{array} \right.\end{align*} Hence $F_g^* = F_g$ since the involution $*$ is linear and the characteristic of $\zz$ is zero. 
\end{proof}

Now we prove our main theorem.

\begin{thm}
\label{thm2sec4}
If the multiplicative order of $2$ modulo $p$ is congruent to $2$ modulo $4$, then $\zz[Q_8 \times C_p]$ has MJD.
\end{thm}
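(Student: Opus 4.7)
By Proposition \ref{prop4}, it suffices to prove that $v_s \in \zz[G]$ for every $v \in \mathcal{V}$, and by Theorem \ref{thm1}(i) this amounts to showing $F_g \in 2\zz[C_p]$ for each $g \in \{a,b,c\}$. By Proposition \ref{propFeb1}(ii) and Theorem \ref{thm1}(ii), $F_g$ is $*$-invariant with $F_g(1)=0$; under $\Delta:\zz[C_p]\hookrightarrow \zz\oplus\zz[\varepsilon]$ this forces $F_g\leftrightarrow(0,F_g(\varepsilon))$ with $F_g(\varepsilon)\in\zz[\varepsilon+\varepsilon^{-1}]$, so the goal becomes $F_g(\varepsilon)\in 2\zz[\varepsilon+\varepsilon^{-1}]$.

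Write $m=2k$, so $k$ is odd by hypothesis. Then $2^k\equiv -1\pmod p$, so the order of $2$ in $(\zz/p)^{\times}/\{\pm1\}$ is $k$, and $2\zz[\varepsilon+\varepsilon^{-1}]=\mathfrak{q}_1\cdots \mathfrak{q}_s$ is a product of $s=(p-1)/m$ distinct unramified primes, each with residue field $\ff_{2^k}$. Fix one $\mathfrak{q}$; let $W=W(\ff_{2^k})$ be the completion of $\zz[\varepsilon+\varepsilon^{-1}]$ at $\mathfrak{q}$, and set $y_g=F_g(\varepsilon)\in W$, $\xi_g=y_g \bmod 2 \in \ff_{2^k}$. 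The key observation is that $y^2 \bmod 4$ is determined by $\bar y:=y\bmod 2$: writing $y=[\bar y]+2z$ via the Teichm\"uller lift $[\cdot]$ gives $y^2 \equiv [\bar y]^2 = [\bar y^2]\pmod 4$. Lemma \ref{lem11}(i) therefore upgrades to $[\xi_a^2]+[\xi_b^2]+[\xi_c^2]\equiv 0\pmod 4$. Via Witt addition in $W_2(\ff_{2^k})$ (which in characteristic $2$ reads $(a_0,a_1)+(b_0,b_1)=(a_0+b_0,\,a_1+b_1+a_0b_0)$), this sum is the Witt vector $(\xi_a^2+\xi_b^2+\xi_c^2,\; \xi_a^2\xi_b^2+\xi_b^2\xi_c^2+\xi_c^2\xi_a^2)$, so both coordinates vanish in $\ff_{2^k}$. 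From the first we get $\xi_c=\xi_a+\xi_b$; substituting into the second and simplifying in characteristic $2$ yields
\[
\xi_a^2+\xi_a\xi_b+\xi_b^2=0 \quad\text{in } \ff_{2^k}.
\]

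If $\xi_b\ne 0$, then $u:=\xi_a/\xi_b\in\ff_{2^k}$ satisfies $u^2+u+1=0$, making $u$ a primitive cube root of unity in $\ff_{2^k}$; this forces $\ff_4\subseteq\ff_{2^k}$, i.e., $2\mid k$ --- contradicting that $k$ is odd. Hence $\xi_b=0$, then $\xi_a=0$, and $\xi_c=0$. Since this holds at every $\mathfrak{q}$ above $2$, $F_g(\varepsilon)\in 2\zz[\varepsilon+\varepsilon^{-1}]$, so $F_g\in 2\zz[C_p]$, finishing the proof. I expect the main obstacle to be extracting usable mod-$4$ content from $\sum F_g^2\equiv 0\pmod 4$: since squaring is not additive, one must lift to $W_2(\ff_{2^k})$ (or argue with an equivalent $2$-adic expansion) to convert the congruence into the symmetric-function relation $\xi_a^2+\xi_a\xi_b+\xi_b^2=0$, after which the parity condition on $k$ closes the argument.
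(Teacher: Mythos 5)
Your proof is correct, and it takes a genuinely different route from the paper's. The paper stays entirely inside $\zz[C_p]$ and works with the coefficient-level identity $F_g^{2^j} \equiv \sum g_i t^{2^j i} \pmod 2$: it first derives $F_a^3 \equiv F_b^3 \pmod 2$ from Lemma~\ref{lem11}, then uses $*$-invariance together with $2^m\equiv -1\pmod p$ to get $F_g^{2^m}\equiv F_g\pmod 2$, and finally exploits $3\mid 2^m+1$ (this is where $m$ odd enters) to deduce $F_a^2\equiv F_b^2\pmod 2$ and hence $F_a\equiv F_b\pmod 2$. Your argument instead pushes $F_g$ into $\zz[\varepsilon+\varepsilon^{-1}]$ via $*$-invariance and works locally at each prime of that ring over $2$; the Teichm\"uller/Witt-vector manipulation converts the mod-$4$ constraint $\sum F_g^2\equiv 0\pmod 4$ into the two symmetric-function equations in $\ff_{2^k}$, and the hypothesis $k$ odd is used precisely as ``$\ff_4\not\subseteq\ff_{2^k}$, so $X^2+X+1$ has no root.'' Both proofs rely on the same inputs (Proposition~\ref{prop4}, Theorem~\ref{thm1}, Lemma~\ref{lem11}, Proposition~\ref{propFeb1}(ii)); the paper's is elementary and self-contained, whereas yours requires the Witt-vector formalism but makes the role of the parity hypothesis conceptually transparent --- the failure mode when $k$ is even is exactly the existence of a cube root of unity in the residue field, which matches the counterexample for $p=5$ ($k=2$) discussed in the remark after the theorem. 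One small point worth making explicit: to pass from ``$F_g(\varepsilon)\in 2\zz[\varepsilon+\varepsilon^{-1}]$'' back to ``$F_g\in 2\zz[C_p]$'' you should observe that $F_g = 2\beta + c\,\Phi_p(t)$ with $\Phi_p(1)=p$ odd and $F_g(1)=0$ forces $c$ even; you implicitly use this, and it is correct, but it deserves a sentence.
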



\begin{proof}
Let $u \in \mathcal{V}$, $u = \sum_{g \in Q_8} f_g(t) g$ and $F_g = f_g(t) + f_{gz}(t)$ as previous. To have the result, it suffices to show that $F_g \in 2 \zz[C_p]$ for $g \in \{a, b, c\}$ by Theorem~\ref{thm1}(i). First of all, we have $F_a + F_b + F_c \equiv 0 \pmod{2}$ by Lemma~\ref{lem11}(ii). Consider that $F_c \equiv F_a + F_b \pmod{2}$ and we can obtain $F_c^2 \equiv (F_a + F_b)^2 \pmod{4}$. It follows that $2 F_a^2 + 2 F_b^2 + 2 F_a F_b \equiv 0 \pmod{4}$ by Lemma~\ref{lem11}(i) so $$F_a^2 + F_b^2 + F_a F_b \equiv 0 \pmod{2}.$$ Multiplying $(F_a - F_b)$ on both sides, we have \begin{align*} F_a^3 \equiv F_b^3 \pmod{2}. 
\end{align*}

Write $F_g = \sum_{i=0}^{p-1} g_i t^i$ for $g_i \in \zz$ and $g \in \{a, b, c\}$. Note that we have the congruence $F_g^2 \equiv \sum_{i=0}^{p-1} g_i t^{2i} \pmod{2}$ (see the proof of Lemma~\ref{lem11}(ii)). Continue this squaring process and we can obtain that $$F_g^{2^k} \equiv \sum_{i=0}^{p-1} g_i t^{2^k i} \pmod{2} \quad \text{for any } k \in \nn.$$ Let $2 m$ be the multiplicative order of $2$ modulo $p$, then $2^{m} \equiv -1 \pmod{p}$. Moreover, $m$ is odd by assumption. We obtain that $t^{2^{m} i} = t^{-i} = (t^i)^*$ since $t^p = 1$. By Proposition~\ref{propFeb1}(ii), we have $$F_g^{2^{m}} \equiv \sum_{i=0}^{p-1} g_i  t^{2^m i}\equiv \sum_{i=0}^{p-1} g_i (t^{i})^* \equiv F_g^* \equiv F_g \pmod{2} \quad \text{for } g \in \{a, b, c\}.$$

Now, we observe that $2^m + 1 \equiv (-1)^m + 1 \equiv 0 \pmod{3}$ since $m$ is odd. Hence, by the congruence $F_a^3 \equiv F_b^3 \pmod{2}$, we obtain $$F_a^2 \equiv F_a^{2^m + 1} \equiv F_b^{2^m +1} \equiv F_b^2 \pmod{2}.$$ Therefore, $$F_a \equiv F_b \pmod{2}$$ by the congruence $F_g^2 \equiv \sum_{i=0}^{p-1} g_i t^{2i} \pmod{2}$ again. It follows that $F_c \equiv F_a + F_b \equiv 0 \pmod{2}$. By symmetry, we have $F_a \equiv F_b \equiv 0 \pmod{2}$.
\end{proof}

\begin{remk*}
In the previous proof, we basically show that if three $*$-invariant elements $\alpha, \beta, \gamma$ in $\zz[C_p]$ satisfy $\alpha^2 + \beta^2 + \gamma^2 \equiv 0 \pmod{4}$, then $\alpha \equiv \beta \equiv \gamma \equiv 0 \pmod{2}$ for certain primes $p$. However, it is not true for $p = 5$. For instance, if we consider that $\alpha = t + t^{4}$, $\beta = t^2 + t^{3}$ and $\gamma = \alpha + \beta$, then we still have $\alpha^2 + \beta^2 + \gamma^2 \equiv 0 \pmod{4}$ but $\alpha, \beta, \gamma \not\equiv 0 \pmod{2}$.
\end{remk*}

\section{A Proof for $p=5$}
\label{sec5}

In this section, we will show that $\zz[Q_8 \times C_5]$ has MJD and our proof is different from \cite{WanZho}. Let $p$ be an odd prime. For convenience, we write $\mathcal{U}_1 = \mathcal{U}_1(\zz[C_p])$, $\mathcal{U}_2 = \mathcal{U}_2(\zz[C_p])$ and $\mathcal{U}_* = \mathcal{U}_*(\zz[C_p])$. Recall that $C_p = \langle t \mid t^p = 1 \rangle$. Consider automorphisms $$\begin{array}{rccc} \varphi_i : & \zz[C_p] & \to & \zz[C_p] \\ & t & \mapsto & t^i \end{array}$$ for $1 \leq i \leq p-1$.  For each $i$, it is clear that $\varphi_i$ forms an automorphism of $\mathcal{U}_1$ since $\varphi_i$ preserves augmentation. Moreover, $\varphi_i(C_p) = C_p$ and $\varphi_i(\mathcal{U}_2) \subseteq \mathcal{U}_2$. It follows that $\varphi_i(\mathcal{U}_2) = \mathcal{U}_2$ because $\mathcal{U}_2 = \varphi_i (\varphi_j (\mathcal{U}_2) )\subseteq \varphi_i(\mathcal{U}_2)$ for some $\varphi_j$ with $\varphi_i \varphi_j = \id$, the identity map. By Lemma~\ref{lem10}, $\mathcal{U}_2 = \mathcal{U}_*$. Consequently, each $\varphi_i$ forms a group automorphism when restricting on $\mathcal{U}_1$, $\mathcal{U}_2$ and $\mathcal{U}_*$, respectively.

For $\alpha \in \zz[C_p]$, we define $$N(\alpha):= \prod_{i=1}^{p-1} \varphi_i(\alpha).$$ As in Section~\ref{sec2}, let $\varepsilon$ be a primitive $p$-th root of $1$ in $\cc$ and consider the ring homomorphism $$\delta: \zz[C_p] \to \zz[\varepsilon]$$ given by $t \mapsto \varepsilon$. For each $\alpha \in \zz[C_p]$, we have $$\delta(N(\alpha)) = N_{\qq(\varepsilon) / \qq} (\delta(\alpha))$$ where $N_{\qq(\varepsilon) / \qq}$ is the norm map on $\qq(\varepsilon)$. 

If $w \in \mathcal{U}_1$, then $N(w) \in \mathcal{U}_1$ and $\delta(N(w)) \in \mathcal{U}_1(\zz[\varepsilon])$ by Proposition~\ref{prop3}. On the other hand, $N_{\qq(\varepsilon) / \qq} (\delta(w)) = \pm 1$ since $\delta(w)$ is a unit in $\zz[\varepsilon]$. Thus, $\delta(N(w)) = \pm 1$. Note that $-1 \not\in \mathcal{U}_1(\zz[\varepsilon])$, otherwise $-1 \in 1 + (1 - \varepsilon) \zz[\varepsilon]$ and it shows $p \mid 2$ via $N_{\qq(\varepsilon) / \qq}(1 - \varepsilon) = p$. Hence, we have $\delta(N(w)) = 1$. Then, we get $N(w) = 1$ by Proposition~\ref{prop3} again. As a consequence, $$N(w) = 1 \quad \mbox{for} \quad w \in \mathcal{U}_1.$$

Observe that $U = \{\varphi_i \mid 1 \leq i \leq p-1\}$ forms a group isomorphic to $(\zz / p \zz)^{\times}$ via $\varphi_i \mapsto \overline{i}$ for each $i$. It follows that $\varphi_{p-1}$ is of order $2$ in $U$ and $$U = \bigcup_{i=1}^{(p-1)/2} \{\varphi_i\} \langle \varphi_{p-1} \rangle$$ is a union of coset representatives. We note that $\varphi_{p-1}(\alpha) = \alpha^*$ for any $\alpha \in \zz[C_p]$. Thus, if $w \in \mathcal{U}_2 = \mathcal{U}_*$, we have $\varphi_{p-1}(w) = w$ and it follows that $N(w) = v^2$ where $v = \prod_{i=1}^{(p-1)/2} \varphi_i (w)$. Since $\mathcal{U}_2 \subseteq \mathcal{U}_1$, we have $v^2 = 1$ from the above. We remark that if $H$ is a finite group and $v$ is a torsion unit of augmentation $1$ in $\zz[H]$, then the order of $v$ divides $|H|$ (see \cite[Lemma~37.3]{Seh2}). Since $|C_p| = p$ is an odd prime, we can conclude that $v = 1$. In other words, $$\prod_{i=1}^{(p-1)/2} \varphi_i (w) = 1 \quad \text{for} \quad w \in \mathcal{U}_2.$$ In particular, we have $w \cdot \varphi_2(w) = 1$ for $w \in \mathcal{U}_2$ when $p = 5$. Now, we can prove the following result.

\begin{thm}
\label{thm3}
$\zz[Q_8 \times C_5]$ has MJD.
\end{thm}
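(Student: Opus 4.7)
The plan is to invoke Proposition~\ref{prop4} to restrict to non-semisimple units $u \in \mathcal{V}$ and prove $F_g = f_g(t) + f_{gz}(t) \in 2\zz[C_5]$ for every $g \in \{a,b,c\}$; Theorem~\ref{thm1}(i) then gives $u_s \in \zz[G]$, and hence MJD. Throughout set $s := t + t^2 + t^3 + t^4 \in \zz[C_5]$.

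The heart of the argument is a direct analysis of the units $w_1, w_a, w_b, w_c \in \mathcal{U}_2(\zz[C_5])$ arising from $\phi(u)$, working modulo $8$. By Proposition~\ref{prop3} together with the explicit description of $\mathcal{U}_1(\zz[\varepsilon_5])$, the group $\mathcal{U}_2(\zz[C_5])$ is infinite cyclic; a convenient generator is $w_0 = -1 + t^2 + t^3$. Its cube $u_0 := w_0^3 = -7 - 2t + 6t^2 + 6t^3 - 2t^4$ generates the subgroup of $\mathcal{U}_2$ consisting of elements congruent to $1 \pmod 2$. Since each $w_g \equiv 1 \pmod 2$ by Proposition~\ref{propFeb1}(i), one may write $w_g = u_0^{k_g}$ for some $k_g \in \zz$.

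The key computation is that $w_g \pmod 8$ depends only on $k_g \pmod 2$. Reducing the coefficients of $u_0$ modulo $8$ yields $u_0 \equiv 1 + 6 s \pmod 8$, and this residue is $\varphi_2$-invariant (as $\varphi_2$ fixes $s$). Combined with the relation $u_0 \cdot \varphi_2(u_0) = 1$ from the identity established just before the theorem, this gives
\[
u_0^2 \;\equiv\; u_0 \cdot \varphi_2(u_0) \;=\; 1 \pmod 8,
\]
so $w_g = u_0^{k_g} \equiv 1 + 6 \eta_g\, s \pmod 8$ with $\eta_g := k_g \bmod 2 \in \{0,1\}$.

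Finally, since $F_1 = 1 + 2f_z(t)$ we have $4 F_1 \equiv 4 \pmod 8$; comparing with $4F_1 = w_1 + w_a + w_b + w_c \equiv 4 + 6 s(\eta_1 + \eta_a + \eta_b + \eta_c) \pmod 8$ forces $6s(\eta_1 + \eta_a + \eta_b + \eta_c) \equiv 0 \pmod 8$, hence $\eta_1 + \eta_a + \eta_b + \eta_c \equiv 0 \pmod 4$. Since each $\eta_g \in \{0,1\}$, this sum is either $0$ or $4$, so all four $\eta_g$ are equal. Then for each $h \in \{a,b,c\}$ the corresponding formula $4F_h = \sum_{g} \chi_h(g)\, w_g$, where $\chi_h$ is the associated nontrivial character of $C_2 \times C_2$, satisfies $4 F_h \equiv (1 + 6\eta\, s) \sum_g \chi_h(g) = 0 \pmod 8$, so $F_g \in 2 \zz[C_5]$ for all $g \in \{a,b,c\}$ as required. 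The main obstacle is pinning down $w_g$ modulo $8$; this relies crucially on the relation $u_0 \cdot \varphi_2(u_0) = 1$, which is only available for $p = 5$ and explains why the argument here is specialized to this prime, bypassing the explicit computations in $\mathcal{U}(\zz[\varepsilon_5])$ of \cite{WanZho}.
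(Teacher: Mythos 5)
Your proof is correct, but it takes a route that the paper deliberately set out to avoid. The load-bearing step is the claim that $\mathcal{U}_2(\zz[C_5])$ is infinite cyclic with generator $w_0 = -1 + t^2 + t^3$; verifying this amounts to knowing the fundamental unit of $\zz[\varepsilon_5]$ (indeed $\delta(w_0) = -\varepsilon^{-1}(1+\varepsilon)^2$, which generates $\mathcal{U}_1(\zz[\varepsilon_5])$ modulo torsion), which is precisely the kind of computation in $\mathcal{U}(\zz[\varepsilon_5])$ that the paper says it wants to bypass. Once you have the generator, the rest of your argument is clean and I checked it: $u_0 = w_0^3$ does generate the subgroup of $\mathcal{U}_2$ that is $\equiv 1 \pmod 2$ (the image of $\mathcal{U}_2$ in $(\zz/2)[C_5]^\times$ has order $3$), $u_0 \equiv 1 + 6s \pmod 8$, the relation $u_0\,\varphi_2(u_0) = 1$ together with $\varphi_2$-invariance of $1+6s$ gives $u_0^2 \equiv 1 \pmod 8$, and the comparison $4F_1 = w_1 + w_a + w_b + w_c \equiv 4 \pmod 8$ forces all four $\eta_g$ to coincide, whence $4F_g \equiv 0 \pmod 8$ for $g \in \{a,b,c\}$.

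The paper's own proof shares the same special-to-$p=5$ ingredient, namely $w\,\varphi_2(w) = 1$ for $w \in \mathcal{U}_2(\zz[C_5])$, but applies it quite differently: it passes to the inverse unit $u^{-1}$, uses $w_g^{-1} = \varphi_2(w_g)$ to deduce $H_g = \varphi_2(F_g)$, invokes Lemma~\ref{lem7} ($u_s + u_s^{-1} \in \zz[G]$) to get $F_g + \varphi_2(F_g) \equiv 0 \pmod 2$, and combines this with the elementary congruence $F_g^2 \equiv \varphi_2(F_g) \pmod 2$ and the relation $F_a^3 \equiv F_b^3 \pmod 2$ (from the Theorem~\ref{thm2sec4} argument) to finish. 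That route never touches the explicit structure of $\mathcal{U}(\zz[\varepsilon_5])$. Your approach buys a very concrete mod-$8$ picture of the units $w_g$, which is pleasant; the paper's approach buys independence from the fundamental-unit computation, which is the whole point of Section~\ref{sec5} as stated in the introduction. If you wish to present your version, you should at minimum cite or prove that $w_0$ generates $\mathcal{U}_2(\zz[C_5])$, since that fact is currently asserted without justification and is the one place where nontrivial arithmetic of $\zz[\varepsilon_5]$ enters.
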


\begin{proof}
Let $u \in \mathcal{V}$, $u = \sum_{g \in Q_8} f_g(t) g$ and $F_g = f_g(t) + f_{gz}(t)$ as in Section~\ref{sec4}. As in the proof of Theorem~ \ref{thm2sec4}, we still have $F_a^3 \equiv F_b^3 \pmod{2}$ and it suffices to show that $F_a \equiv F_b \pmod{2}$. Recall the four units $w_1, w_a, w_b, w_c \in \mathcal{U}_2$ above Lemma~\ref{lem11}. Since $p = 5$, we have $$w_g^{-1} = \varphi_2(w_g)$$ for each $g \in \{1, a, b, c\}$. Now we consider $u^{-1}$ which is also non-semisimple and we can apply Theorem~\ref{thm1}. Write $u^{-1} = \sum_{g \in Q_8} h_g(t) g$ for $h_g(t) \in \zz[C_5]$ and $H_g = h_g(t) + h_{g z}(t)$ for $g \in \{1, a, b, c\}$. Recall the natural homomorphism $\phi : \qq[G] \to \qq[G/G']$ in Section~\ref{sec2} and we have $\phi(u^{-1}) = H_1 + H_a \overline{a} + H_b \overline{b} + H_c \overline{c}$. As (\ref{sec4eq1}) in Section~\ref{sec4}, we can obtain \begin{align*} \left\{\begin{array}{ccl} w_1^{-1} & = & H_1 + H_a + H_b + H_c, \\ w_a^{-1} & = & H_1 + H_a - H_b - H_c, \\ w_b^{-1} & = & H_1 - H_a + H_b - H_c, \\ w_c^{-1} & = & H_1 - H_a - H_b + H_c, \end{array} \right. \end{align*} since $\phi(u^{-1}) = \phi(u)^{-1}$. It follows that \begin{align*} \left\{\begin{array}{ccl} 4 H_1 & = & w_1^{-1} + w_a^{-1} + w_b^{-1} + w_c^{-1}, \\ 4 H_a & = & w_1^{-1} + w_a^{-1} - w_b^{-1} - w_c^{-1}, \\ 4 H_b & = & w_1^{-1} - w_a^{-1} + w_b^{-1} - w_c^{-1}, \\ 4 H_c & = & w_1^{-1} - w_a^{-1} - w_b^{-1} + w_c^{-1}. \end{array} \right.\end{align*} 
Now, we can deduce from $w_g^{-1} = \varphi_2(w_g)$ and from the equations in the proof of Proposition~\ref{propFeb1}(ii) that $$H_g = \varphi_2(F_g)$$ for each $g \in \{1, a, b, c\}$ since $\varphi_2$ is an automorphism of $\zz[C_5]$.

Now, by Lemma~\ref{lem7}, $u_s + u_s^{-1} \in \zz[G]$ which implies that $$\frac{1}{2} \sum_{g \in \{a, b, c\}} (F_g + H_g) g (1+z) \in \zz[G]$$ by Theorem~\ref{thm1}(i). Thus, we have $\frac{F_g + H_g}{2} \in \zz[C_p]$ for each $g \in \{a, b, c\}$. In other words, $F_g + H_g \equiv 0 \pmod{2}$. Then for each $g$, $$F_g \equiv H_g \equiv \varphi_2(F_g) \equiv \sum_{i=0}^{p-1} g_i t^{2i} \equiv F_g^2 \pmod{2}$$ if $F_g = \sum_{i=0}^{p-1} g_i t^{i}$. It follows that $F_g^2 \equiv F_g^3 \pmod{2}$. Because $F_a^3 \equiv F_b^3 \pmod{2}$, we obtain $$F_a \equiv F_a^2 \equiv F_a^3 \equiv F_b^3 \equiv F_b^2 \equiv F_b \pmod{2}.$$ Therefore, $\zz[Q_8 \times C_5]$ has MJD.
\end{proof}

\section{The Density of Primes}
\label{sec6}

In this section, we compute the Chebotarev density for primes satisfying the condition of Theorem~\ref{thm2}. For convenience, we denote $\ord_p(n)$ the multiplicative order of $n$ modulo $p$ and let $$P = \{\text{odd primes } p \mid \ord_p(2) \equiv 2 \pmod{4}\}.$$ To compute the Chebotarev density of $P$, it suffices to describe primes in $P$ by $\ord_p(2)$ and $\ord_p(4)$. Specifically, $$\text{$p \in P$ if and only if $\ord_p(2)$ is not odd and $\ord_p(4)$ is odd}$$ since $\ord_p(2) = 2 \cdot \ord_p(4)$ when $\ord_p(2)$ is even.

Recall the main result of \cite{Odo}.

\begin{thm}[{\cite[Theorem 1]{Odo}}]
\label{thmOdo}
Let $\mathscr{Q}$ be a finite non-empty set of primes $q$, with product $Q$, and let $g > 1$. Let $t \geq 1$ be the largest natural number such that $g$ is a $t$-th power in $\zz$ and, for each $q \in \mathscr{Q}$, let $q^{\tau(q)} \| t$. Suppose that $g = \hat{g}^t$, $\hat{g} \in \zz$, $\hat{g} > 1$, and that $\hat{g} = \tilde{g} \times \text{(square)}$ in $\zz$, where $\tilde{g}$ is squarefree. For each $q \in \mathscr{Q}$ let $q^{\gamma(q)} \| \tilde{g}$. Then, as $x \to \infty$, the number of primes $p \leq x$ such that no prime in $\mathscr{Q}$ divides $\ord_p(g)$ is asymptotically $$\lambda(\mathscr{Q}, g) {\rm Li}(x) + O \left( {\rm Li}(x) \exp\left( -c \frac{\log \log x}{\log \log \log x}\right)\right),$$ where ${\rm Li}(x) = \int_2^x d t / \log t$ and $$\lambda(\mathscr{Q}, g) = \prod_{q \in \mathscr{Q}} \left( 1 - \frac{q^{1 - \tau(q)}}{q^2 - 1} \right) + \lambda^*(\mathscr{Q}, g),$$ where $\lambda^*(\mathscr{Q}, g)$ is as follows:
\begin{itemize}
\item[(i)]
$\lambda^*(\mathscr{Q}, g) = 0$ if $2 \not\in \mathscr{Q}$;
\item[(ii)]
$\lambda^*(\mathscr{Q}, g) = 0$ if $\tilde{g} \nmid 2 Q$;
\item[(iii)]
if $2 \in \mathscr{Q}$ and $\tilde{g} \mid 2 Q$, then $$\lambda^*(\mathscr{Q}, g) = \prod_{q \in \mathscr{Q}} c_q(g),$$ where, for $q > 2$, $c_q(g) = 1 - \gamma(q) - (q^2 - 1)^{-1} q^{1 - \tau(q)}$, while, for $q = 2$, $$c_2(g) = \left\{ \begin{array}{ll} (2^{\tau(2)} 3)^{-1} & \text{if } \tilde{g} \equiv 1 \pmod{4} \\ (2^{\tau(2)} 3)^{-1} - \sum_{1 + \tau(q) \leq n < 2} \frac{1}{2} & \text{if } \tilde{g} \equiv 3 \pmod{4} \\ (2^{\tau(2)} 3)^{-1} & \text{if } \tilde{g} \equiv 2 \pmod{4} \text{ and } 4 \mid t \\ -\frac{1}{12} & \text{if } \tilde{g} \equiv 2 \pmod{4} \text{ and } 2 \, \| \, t \\ - \frac{1}{24} & \text{if } \tilde{g} \equiv 2 \pmod{4} \text{ and } 2 \nmid t. \end{array} \right.$$
\end{itemize}
\end{thm}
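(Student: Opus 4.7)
The strategy is the classical Hooley--Artin approach: reformulate the divisibility condition on $\ord_p(g)$ as a splitting condition in Kummer--cyclotomic extensions of $\qq$, then apply an effective Chebotarev density theorem. For a single prime $q$, set $a_q(p) = v_q(p-1)$. The key elementary observation is that $q \nmid \ord_p(g)$ if and only if $g$ is a $q^{a_q(p)}$-th power modulo $p$, which by Kummer theory is equivalent to $p$ splitting completely in $F_{q,a_q(p)} := \qq(\zeta_{q^{a_q(p)}}, g^{1/q^{a_q(p)}})$. Since $a_q(p)$ varies with $p$, one stratifies by fixing a tuple $(a_q)_{q \in \mathscr{Q}}$ and counting primes with $v_q(p-1) = a_q$ for each $q$, and $g$ an appropriate power modulo $p$. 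The difference between requiring $v_q(p-1) = a_q$ exactly rather than $\geq a_q$ is handled by inclusion--exclusion against $\qq(\zeta_{q^{a_q + 1}})$.

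For each fixed tuple $(a_q)$, the count is handled by the Chebotarev density theorem applied to the compositum $L_{(a_q)}$ of the fields $\qq(\zeta_{q^{a_q + 1}}, g^{1/q^{a_q}})$ for $q \in \mathscr{Q}$. The linear disjointness of $\qq(\zeta_{q^k})$ for distinct primes $q$ makes the Galois group factor by $q$, giving a product of local densities. At an odd $q$, the Kummer degree $[\qq(\zeta_{q^k}, g^{1/q^k}) : \qq(\zeta_{q^k})]$ equals $q^{k-\min(k,\tau(q))}$, because $g = \hat g^t$ is already a $q^{\tau(q)}$-th power in $\zz$ but not a higher one. A direct computation of the proportion of \emph{good} Frobenius classes for fixed $a_q$, followed by summing the resulting geometric series in $a_q$, collapses to the stated factor $1 - q^{1-\tau(q)}/(q^2-1)$ contributing to $\lambda(\mathscr{Q}, g)$.

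The main obstacle, and where the correction term $\lambda^*(\mathscr{Q}, g)$ arises, is the prime $q = 2$. Here the quadratic Kummer subfield $\qq(\sqrt{\tilde g})$ can already lie inside a cyclotomic field $\qq(\zeta_N)$, destroying the linear disjointness that simplified the odd case. Which quadratic field appears is controlled by $\tilde g \bmod 4$ via the conductor--discriminant formula (recall $\qq(\sqrt{d}) \subset \qq(\zeta_{|d|})$ iff $d \equiv 1 \pmod 4$ for squarefree $d$), while whether this subfield actually sits inside $\qq(\zeta_Q)$ is precisely the condition $\tilde g \mid 2Q$ appearing in the statement. The $2$-adic valuation $v_2(t)$ further dictates how far up the $2$-power Kummer tower $g$ is already a perfect power in $\zz$, and a case-by-case Galois-theoretic book-keeping --- the five cases listed for $c_2(g)$ --- yields the correction. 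Finally, the explicit error term is obtained by applying the effective unconditional Chebotarev density theorem of Lagarias--Odlyzko to each $L_{(a_q)}$, truncating the stratification tuples $(a_q)$ at a threshold of order $\log\log x / \log\log\log x$, and balancing the truncation loss against the Chebotarev bound exactly as in Hooley's treatment of Artin's primitive-root conjecture; the optimisation of this cutoff is the source of the displayed $\exp(-c \log\log x/\log\log\log x)$ savings.
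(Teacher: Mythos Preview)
The paper does not prove this theorem at all: it is quoted verbatim as \cite[Theorem~1]{Odo} and used as a black box to compute the densities $\lambda(\{2\},2)$ and $\lambda(\{2\},4)$ in Corollary~\ref{corHas} and Theorem~\ref{thm1sec6}. There is therefore no ``paper's own proof'' to compare your proposal against.

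That said, your sketch is a faithful outline of how results of this type (and Odoni's in particular) are established. The reformulation of $q \nmid \ord_p(g)$ as complete splitting in the Kummer--cyclotomic tower $\qq(\zeta_{q^{a}}, g^{1/q^{a}})$, the stratification over the tuple $(a_q)_{q\in\mathscr{Q}}$ with inclusion--exclusion to pin down $v_q(p-1)$ exactly, the product structure coming from linear disjointness at odd $q$, and the identification of the correction $\lambda^*$ as arising from the coincidence $\qq(\sqrt{\tilde g}) \subseteq \qq(\zeta_{N})$ at $q=2$ are all correct ingredients. The error term via an unconditional effective Chebotarev theorem with a cutoff on the $a_q$ is likewise the standard mechanism. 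If you were to flesh this out, the only places requiring genuine care are (a) the exact degree computations in the $2$-tower when $\tilde g \equiv 2,3 \pmod 4$, which is where the five-way case split for $c_2(g)$ comes from, and (b) making the dependence of the Chebotarev error on the degree and discriminant of $L_{(a_q)}$ explicit enough to justify the truncation. But as a high-level strategy your proposal is sound; it simply goes well beyond what the present paper attempts, since here the theorem is merely cited.
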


The constant $\lambda(\mathscr{Q},g)$ is the Chebotarev density for those primes $p$ that $\ord_p(g)$ can not be divided by any prime in $\mathscr{Q}$. For instance, we can recover Hasse's result.

\begin{cor}[{\cite[Section 3]{Has}}]
\label{corHas}
The density of primes $p$ such that $\ord_p(2)$ is odd is $7/24$. Therefore, the density of primes $p$ such that $\ord_p(2)$ is even is $17 / 24$.
\end{cor}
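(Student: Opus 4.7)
The plan is to apply Theorem~\ref{thmOdo} directly with $g = 2$ and $\mathscr{Q} = \{2\}$, since the condition ``$\ord_p(2)$ is odd'' is precisely the condition that no prime in $\{2\}$ divides $\ord_p(2)$. The density $\lambda(\{2\},2)$ produced by the theorem is then (after dividing the asymptotic count by ${\rm Li}(x) \sim \pi(x)$) the natural density of such primes.

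The first step is to identify the numerical invariants attached to $g = 2$. Because $2$ is not a nontrivial perfect power in $\zz$, we have $t = 1$, and hence $\tau(q) = 0$ for every $q$. This gives $\hat{g} = 2$, whose squarefree part is $\tilde{g} = 2$, so $\gamma(2) = 1$. With $Q = 2$, the divisibility $\tilde{g} \mid 2Q$ reduces to $2 \mid 4$, which holds, so branch (iii) of the theorem is the one to use and $\lambda^*(\{2\},2) = c_2(2)$.

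The second step is to compute the two pieces of $\lambda(\{2\},2)$. The main product collapses to the single factor $1 - \frac{2^{1-0}}{2^2 - 1} = 1 - \frac{2}{3} = \frac{1}{3}$. For the correction $c_2(2)$, note that $\tilde{g} = 2 \equiv 2 \pmod 4$ and $2 \nmid t = 1$, so the bottom branch in the definition of $c_2$ applies and yields $c_2(2) = -\frac{1}{24}$. Adding, the density of primes $p$ with $\ord_p(2)$ odd is
\[
\lambda(\{2\},2) = \frac{1}{3} - \frac{1}{24} = \frac{7}{24},
\]
and the density of primes with $\ord_p(2)$ even is the complement $1 - \frac{7}{24} = \frac{17}{24}$.

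There is no serious obstacle here; the only point requiring care is selecting the correct branch of the piecewise definition of $c_2$. The genuine work lies in the next step of Section~\ref{sec6}, not in this corollary: to isolate the primes of Theorem~\ref{thm2} one must combine this application of Odoni's theorem to $g = 2$ (controlling whether $\ord_p(2)$ is even) with a second application to $g = 4$ (controlling whether $\ord_p(4)$ is odd), since the condition $\ord_p(2) \equiv 2 \pmod 4$ is equivalent to $\ord_p(2)$ being even together with $\ord_p(4)$ being odd.
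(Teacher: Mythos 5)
Your proof is correct and takes exactly the same route as the paper: apply Odoni's Theorem~\ref{thmOdo} with $\mathscr{Q} = \{2\}$, $g = 2$, identify $t = 1$, $\tau(2) = 0$, $\tilde{g} = 2$, and compute $\lambda(\{2\},2) = \tfrac{1}{3} - \tfrac{1}{24} = \tfrac{7}{24}$. You add slightly more explicit bookkeeping in verifying that branch (iii) applies and that the bottom case of the piecewise definition of $c_2$ is the relevant one (since $\tilde g \equiv 2 \pmod 4$ and $2 \nmid t$), but the argument is the same.
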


\begin{proof}
To compute the density of primes $p$ such that the multiplicative order of $2$ modulo $p$ is odd, we can choose $\mathscr{Q} = \{2\}$ and $g = 2$. Then $t = 1$, $\tau(q) = 0$, $\tilde{g} = g = 2$, and $$\lambda(\mathscr{Q}, g) = \left(1 - \frac{2^{1-0}}{2^2 - 1} \right) + \lambda^*(\mathscr{Q}, g) = \frac{1}{3} - \frac{1}{24} = \frac{7}{24}.$$
\end{proof}

Using the previous corollary, we can compute the density of $P$.

\begin{thm}
\label{thm1sec6}
The density of $P$ is $7 / 24$ and then $P$ is an infinite set.
\end{thm}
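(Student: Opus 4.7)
The plan is to use the observation already recorded in the section that $p \in P$ if and only if $\ord_p(2)$ is even and $\ord_p(4)$ is odd, together with the complementary remark that whenever $\ord_p(2)$ is odd one automatically has $\ord_p(4)$ odd too (because $4 = 2^2$ forces $\ord_p(4) = \ord_p(2)/\gcd(2,\ord_p(2)) = \ord_p(2)$ in that case). Consequently, the set of odd primes $p$ with $\ord_p(4)$ odd is the disjoint union of $P$ and the set of odd primes with $\ord_p(2)$ odd, which gives the reduction
\[
\text{density}(P) = \text{density}\{p : \ord_p(4) \text{ odd}\} - \text{density}\{p : \ord_p(2) \text{ odd}\}.
\]
By Corollary~\ref{corHas} the second density equals $7/24$, so the whole problem reduces to evaluating the first density.

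To compute that first density I would invoke Theorem~\ref{thmOdo} with $g = 4$ and $\mathscr{Q} = \{2\}$, reading off the parameters as follows. The largest $t \geq 1$ for which $4$ is a $t$-th power in $\zz$ is $t = 2$, with $\hat{g} = 2$, and since $2$ is squarefree, $\tilde{g} = 2$. Then for $q = 2$ one has $\tau(2) = 1$ and $\gamma(2) = 1$, so the main factor of $\lambda(\mathscr{Q}, g)$ contributes
\[
1 - \frac{2^{1 - 1}}{2^2 - 1} = 1 - \frac{1}{3} = \frac{2}{3}.
\]
For the correction $\lambda^*(\mathscr{Q}, g)$, note that $2 \in \mathscr{Q}$ and $\tilde{g} = 2$ divides $2Q = 4$, so case (iii) of Theorem~\ref{thmOdo} applies; since $\tilde{g} \equiv 2 \pmod{4}$ and $2 \, \| \, t$, the relevant subcase reads $c_2(4) = -1/12$, and therefore $\lambda^*(\mathscr{Q}, 4) = -1/12$.

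Combining gives $\text{density}\{p : \ord_p(4) \text{ odd}\} = 2/3 - 1/12 = 7/12$, whence
\[
\text{density}(P) \;=\; \frac{7}{12} - \frac{7}{24} \;=\; \frac{7}{24}.
\]
Since this density is strictly positive, $P$ must be infinite, which completes the theorem. I do not anticipate a real obstacle beyond the careful bookkeeping in identifying the correct subcase of $c_2(g)$ in Odoni's formula (checking $\tilde{g} \bmod 4$ and the $2$-adic valuation of $t$); the two auxiliary order identities and the final subtraction are entirely routine.
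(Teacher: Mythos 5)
Your proof is correct and follows essentially the same route as the paper: reduce to the difference $\lambda(\{2\},4)-\lambda(\{2\},2)$ via the observation that $\ord_p(2)$ odd forces $\ord_p(4)$ odd, then evaluate $\lambda(\{2\},4)=7/12$ from Odoni's theorem (with $t=2$, $\tilde{g}=2$, $\tau(2)=1$, $c_2(4)=-1/12$) and subtract Hasse's $7/24$. The only difference is that you spell out the parameter bookkeeping for Odoni's formula more explicitly than the paper does, which the paper leaves largely implicit.
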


\begin{proof}
To count primes in $P$, it is equivalent to count primes $p$ such that $\ord_p(4)$ is odd and $\ord_p(2)$ is not odd. First of all, if $\ord_p(2)$ is odd, then $\ord_p(4)$ is also odd. Moreover, the density of primes $p$ which $\ord_p(4)$ is odd is given by $\lambda(\{2\}, 4)$ and the density of primes $p$ which $\ord_p(2)$ is odd is given by $\lambda(\{2\},2)$. Therefore, by Theorem~\ref{thmOdo} and Corollary~\ref{corHas}, the density of $P$ is $$\lambda(\{2\}, 4) - \lambda(\{2\}, 2) = \left(1 - \frac{2^{1-1}}{2^2 - 1} - \frac{1}{12} \right) - \frac{7}{24} = \frac{7}{24}.$$ 
\end{proof}

We remark here that there are 2917 primes in $P$ for the first 10000 primes.

As a consequence, $\zz[Q_8 \times C_p]$ has MJD for infinitely many primes $p$ with even $\ord_p(2)$.

\section*{Acknowledgment}

The material in this paper is part of the second author's Ph.D. thesis written at National Taiwan Normal University. This work was done when the first author visited Taida Institute for Mathematical Sciences in Taiwan in 2018. The first author would like to thank the invitation and hospitality of the institute. The research of the first author was partially supported by an NSERC discovery grant. The second author would like to thank his advisor, Professor Chia-Hsin Liu, for helpful discussions and continuous encouragement. The authors would like to thank the referee for kind and careful comments.

\bibliographystyle{alpha} 
\bibliography{Bib_MJD} 

\begin{thebibliography}{HPW12}

\bibitem[AHP98]{AroHalPas}
S.R. Arora, A.W. Hales, and I.B.S. Passi.
\newblock The multiplicative {J}ordan decomposition in group rings.
\newblock {\em J. Algebra}, 209:533--542, 1998.

\bibitem[GS95]{GiaSeh}
A.~Giambruno and S.K. Sehgal.
\newblock Generators of large subgroups of units of integral group rings of
  nilpotent groups.
\newblock {\em J. Algebra}, 174:150--156, 1995.

\bibitem[Has66]{Has}
H.~Hasse.
\newblock {\"{U}ber die Dichte der Primzahlen $p$ f\"{u}r die eine vorgegebebe
  ganzrationale Zahl $a \neq 0$ von gerader bzw. ungerader Ordnung mod. $p$
  ist.}
\newblock {\em Math. Ann.}, 166:19--23, 1966.

\bibitem[HP91]{HalPas}
A.W. Hales and I.B.S. Passi.
\newblock Integral group rings with {J}ordan decomposition.
\newblock {\em Arch. Math.}, 57:21--27, 1991.

\bibitem[HP17]{HalPas2}
A.W. Hales and I.B.S. Passi.
\newblock Group rings and {J}ordan decomposition.
\newblock In {\em Groups, rings, group rings, and Hopf algebras}, volume 688 of
  {\em Contemp. Math.}, pages 103--111, Providence, RI, 2017. Amer. Math. Soc.

\bibitem[HPW07]{HalPasWil}
A.W. Hales, I.B.S. Passi, and L.E. Wilson.
\newblock {The multiplicative {J}ordan decomposition in group rings, II}.
\newblock {\em J. Algebra}, 316:109--132, 2007.

\bibitem[HPW12]{HalPasWil2}
A.W. Hales, I.B.S. Passi, and L.E. Wilson.
\newblock {Corrigendum to ``The multiplicative Jordan decomposition in group
  rings, II'' [J. Algebra 316 (1) (2007) 109-132]}.
\newblock {\em J. Algebra}, 371:665--666, 2012.

\bibitem[LP09]{LiuPas}
C.-H. Liu and D.S. Passman.
\newblock {Multiplicative Jordan decomposition in group rings of $3$-groups}.
\newblock {\em J. Algebra Appl.}, 8(4):505--519, 2009.

\bibitem[LP10]{LiuPas2}
C.-H. Liu and D.S. Passman.
\newblock {Multiplicative Jordan decomposition in group rings of $2,3$-groups}.
\newblock {\em J. Algebra Appl.}, 9(3):483--492, 2010.

\bibitem[LP13]{LiuPas3}
C.-H. Liu and D.S. Passman.
\newblock {Multiplicative Jordan decomposition in group rings with a Wedderburn
  component of degree $3$}.
\newblock {\em J. Algebra}, 388:203--218, 2013.

\bibitem[LP14]{LiuPas4}
C.-H. Liu and D.S. Passman.
\newblock {Multiplicative Jordan decomposition in group rings of $3$-groups,
  II}.
\newblock {\em Comm. Algebra}, 42(6):2633--2639, 2014.

\bibitem[Odo81]{Odo}
R.W.K. Odoni.
\newblock {A conjecture of Krishnamurthy on decimal periods and some allied
  problems}.
\newblock {\em J. Number Theory}, 13:303--319, 1981.

\bibitem[Par02]{Par}
M.M. Parmenter.
\newblock Multiplicative {J}ordan decomposition in integral group rings of
  groups of order $16$.
\newblock {\em Comm. Algebra}, 30(10):4789--4797, 2002.

\bibitem[PS02]{PMSeh}
C.~{Polcino Milies} and S.K. Sehgal.
\newblock {\em An Introduction to Group Rings}, volume~1 of {\em Algebras and
  Applications}.
\newblock Kluwer Academic Publishers, Dordrecht, 2002.

\bibitem[Seh78]{Seh}
S.K. Sehgal.
\newblock {\em Topics in Group Rings}, volume~50 of {\em Monographs and
  Textbooks in Pure and Applied Math.}
\newblock Marcel Dekker, Inc., New York, 1978.

\bibitem[Seh93]{Seh2}
S.K. Sehgal.
\newblock {\em Units in {I}ntegral Group {R}ings}, volume~69 of {\em Pitman
  Monographs and Surveys in Pure and Applied Math.}
\newblock Longman Scientific \& Technical, Harlow; copublished in the United
  States with John Wiley \& Sons, Inc., New York, 1993.
\newblock With an appendix by Al Weiss.

\bibitem[WZ17]{WanZho}
{X.-L.} Wang and {Q.-X.} Zhou.
\newblock {Multiplicative Jordan decomposition in integral group ring of group
  $K_8 \times C_5$}.
\newblock {\em Commun. Math. Res.}, 33(1):64--72, 2017.

\end{thebibliography}



\end{document}